





\documentclass[sn-mathphys]{sn-jnl}



\jyear{2021}%

\theoremstyle{thmstyleone}%
\newtheorem{theorem}{Theorem}
%
\newtheorem{corollary}[theorem]{Corollary}
\newtheorem{lemma}[theorem]{Lemma}

\theoremstyle{thmstyletwo}%
\newtheorem{example}{Example}%
\newtheorem{remark}{Remark}%

\theoremstyle{thmstylethree}%
\newtheorem{definition}{Definition}%

\newcommand{\cdummy}{\cdot}
\newcommand{\assign}{:=}
\newcommand{\tmop}[1]{\ensuremath{\operatorname{#1}}}
\newcommand{\tmtextrm}[1]{{\rmfamily{#1}}}

\raggedbottom

\begin{document}

\title[Low-rank approximation of CPS tensor]{The low-rank approximation of fourth-order partial-symmetric and conjugate partial-symmetric tensor}


\author[1,2]{\fnm{Amina} \sur{Sabir}}\email{amina\_sabir1@126.com}

\author*[2]{\fnm{Peng-fei} \sur{Huang}}\email{huangpf@mail.nankai.edu.cn}

\author[2]{\fnm{Qing-zhi} \sur{Yang}}\email{qz-yang@nankai.edu.cn}

\affil[1]{\orgdiv{School of Mathematics and Statistics}, \orgname{Kashi University}, \orgaddress{\city{Kashi}, \postcode{844008}, \country{P.R. China}}}

\affil[2]{\orgdiv{School of Mathematical
Sciences and LPMC}, \orgname{Nankai University}, \orgaddress{\city{Tianjin}, \postcode{300071}, \country{P.R. China}}}


\abstract{We present an orthogonal matrix outer product decomposition for the fourth-order conjugate partial-symmetric (CPS) tensor and show that the greedy successive rank-one approximation (SROA) algorithm can recover this decomposition exactly. Based on this matrix decomposition, the CP rank of CPS tensor can be bounded by the matrix rank, which can be applied to low rank tensor completion. Additionally, we give the rank-one equivalence property for the CPS tensor based on the SVD of matrix, which can be applied on the rank-one approximation for CPS tensors.}

\keywords{Conjugate partial-symmetric tensor, approximation algorithm, rank-one equivalence property, convex relaxation}


\pacs[MSC Classification]{15A69, 15B57, 90C26, 41A50}

\maketitle

\section{Introduction}\label{sec1}

Tensor decomposition and approximation have significant applications in computer vision, data mining, statistical estimation and so on. We refer to \cite{kolda2009tensor} for the survey. Moreover, it is general that tensor from real applications with special symmetric structure. For instance, the symmetric outer product decomposition is particularly important in the process of blind identification of under-determined mixtures \cite{comon2008symmetric}.

Jiang et al.\cite{jiang2016characterizing} studied the functions in multivariate complex variables which always take real values. They proposed the conjugate partially symmetric (CPS) tensor to characterize such polynomial functions, which is the generalization of Hermitian matrix. Various examples of conjugate partial-symmetric tensors can be encountered in engineering applications arising from singal processing, electrical engineering, and control theory\cite{aubry2013ambiguity,de2007fourth}. Ni et al.\cite{Ni2019Hermitian} and Nie et al.\cite{Nie2019Hermitian} researched on the Hermitian tensor decomposition. Motivated by Lieven et al.\cite{de2007fourth}, we proposed a new orthogonal matrix outer product decomposition model for CPS tensors, which explore the orthogonality of these matrices.

It is well known that unlike the matrix case, the best rank-r ($r>1$) approximation of a general tensor may not exist, and even if it admits a solution, it is NP-hard to solve\cite{de2008tensor}. The greedy successive rank-one approximation (SROA) algorithm can be applied to compute the rank-r ($r>1$) approximation of tensor. However, the theoretical guarantee for obtaining the best rank-r approximation is less developed. Zhang et al.\cite{zhang2001rank} first proved the successive algorithm exactly recovers the symmetric and orthogonal decomposition of the underlying real symmetrically and orthogonally decomposable tensors. Fu et al.\cite{2018Successive} showed that SROA algorithm can exactly recover unitarily decomposable CPS tensors. We offer the theoretical guarantee of SROA algorithm for our matrix decomposition model of CPS tensor tensors.

Many multi-dimensional data from real practice are fourth-order tensors and can be formulated as low-CP-rank tensor. However it is very difficult to compute CP-rank of a tensor. Jiang et al.\cite{Jiang2018Low} showed that CP-rank can be bounded by the corresponding rank of square unfolding matrix of tensors. Following the idea we research on the low rank tensor completion for fourth order partial-symmetric tensor in particular.

Recently, Jiang et al. \cite{2015Tensor} proposed convex relaxations for solving a tensor optimization problem closely related to the best rank-one approximation problem for symmetric tensors. They proved an equivalence property between a rank-one symmetric tensor and its unfolding matrix. Yang et al. \cite{Yuning2016Rank} studied the rank-one equivalence property for general real tensors. Based on these rank-one equivalence properties, the above mentioned tensor optimization problem can be casted into a matrix optimization problem, which alleviates the difficulty of solving the tensor problem. In line with this idea, we study the rank-one equivalence property for the fourth-order CPS tensor and transform the best rank-one tensor approximation problem into a matrix optimization problem.

In Section \ref{sec:pre}, we give some notations and definitions. The outer product approximation model based on matrix is proposed and the successive rank-one approximation (SMROA) algorithm is given to solve it in Section \ref{sec:model}. We show that the SMROA algorithm can exactly recover the matrix outer product decomposition or approximation of the CPS tensor in Section \ref{sec:theory}. Section \ref{sec:application} discusses applications of our model simply. In Section \ref{sec:rank1Equ}, we present the rank-one equivalence property of fourth-order CPS tensor, and based on it an application is proposed. Numerical examples are in Section \ref{sec:numerical}.

\section{Preliminary}\label{sec:pre}
All tensors in this paper are fourth-order. For any complex number $z=a+ib\in C$, $\bar{z}=a-ib$ denotes the conjugate of $z$. "$\circ$" denotes the outer product of matrices, namely $\mathcal{A}=X\circ Y$ means that
\[\mathcal{A}_{ijkl}=X_{ij}Y_{kl}.\]
$S^n$ denotes the set of $n$ by $n$ symmetric matrices, the entries of these matrices can be complex or only real according to the context, without causing ambiguity. The inner product between $\mathcal{A},~\mathcal{B}\in C^{n^4}$ is defined as
\[\left<\mathcal{A},~\mathcal{B}\right>=\sum\limits_{i,j,k,l=1}^{n}\mathcal{A}_{ijkl}\bar{\mathcal{B}}_{ijkl}.\]
\begin{definition}
A fourth-order tensor $\mathcal{A}$ is called symmetric if $\mathcal{A}$ is invariant under all permutations of its indices, i.e.,
\begin{equation*}
\mathcal{A}_{ijkl}=\mathcal{A}_{\pi(ijkl)},\quad i,~j,~k,~l=1,\cdots n.
\end{equation*}
\end{definition}
\begin{definition}\cite{Ni2019Hermitian}
A fourth-order complex tensor $\mathcal{A}\in C^{n_1\times n_2\times n_1\times n_2}$ is called a Hermitian tensor if
\[\mathcal{A}_{i_1i_2j_1j_2}=\bar{\mathcal{A}}_{j_1j_2i_1i_2}.\]
\end{definition}
Jiang et al.\cite{jiang2016characterizing} introduced the concept of conjugate partial-symmetric tensors as following.
\begin{definition}
A fourth-order complex tensor $\mathcal{A}\in C^{n^4}$ is called conjugate partial-symmetric (CPS) if
\begin{equation*}
\begin{aligned}
\mathcal{A}_{ijkl}&=\mathcal{A}_{\pi(ij)\pi(kl)}\\
\mathcal{A}_{ijkl}&=\overline{\mathcal{A}}_{klij},\quad i,~j,~k,~l=1,\cdots n.
\end{aligned}
\end{equation*}
\end{definition}
\begin{definition}
A fourth-order tensor $\mathcal{A}\in R^{n^4}$ is called partial-symmetric if
\begin{equation*}
\mathcal{A}_{ijkl}=\mathcal{A}_{\pi(ij)\pi(kl)}=\mathcal{A}_{\pi(kl)\pi(ij)},\quad i,~j,~k,~l=1,\cdots n.
\end{equation*}
\end{definition}
\begin{example}\cite{de2007fourth}
In the blind source separation problem, the cumulant tensor is computed as
\[\mathcal{C}=\sum\limits_{r=1}^Rk_ra_r\circ \bar{a}_r\circ \bar{a}_r\circ a_r.\]
By a permutation of the indices, it is in fact a conjugate partial-symmetric tensor.
\end{example}
\begin{definition}
The square unfolding form $M(\mathcal{A})$ of a fourth-order tensor $\mathcal{A}\in C^{n^4}$ is defined as
\begin{equation*}
M(\mathcal{A})_{(j-1)n+i,(l-1)n+k}=\mathcal{A}_{ijkl}.
\end{equation*}
\end{definition}

\section{Matrix outer product approximation model}\label{sec:model}
Jiang et al.\cite{Jiang2018Low} introduced the new notions of M-decomposition for an even-order tensor $\mathcal{A}$, which is exactly the rank-one decomposition of $M(\mathcal{A})$, followed by the notion of tensor M-rank.

For each $\mathcal{A}\in R^{n^4}$, let $M(\mathcal{A})=U\Sigma V^T=\sum\limits_{i=1}^{n^2}\sigma_iu_i\circ v_i$ be the SVD of $M(\mathcal{A})$, then $\mathcal{A}$ has the following decomposition form
\begin{equation}\label{equ:general}
\mathcal{A}=\sum\limits_{i=1}^{n^2}\sigma_iU_i\circ V_i,
\end{equation}
where $(U_i)_{st}=(u_i)_{(t-1)n+s}$, $(V_i)_{st}=(v_i)_{(t-1)n+s}$, for $i=1,2,\cdots,n^2$. $\left<U_i,U_j\right>=\delta_{ij},~\left<V_i,V_j\right>=\delta_{ij}$, $i,j=1,2,\cdots,n^2$.

We are particularly interested in the tensor with some symmetric properties. And analogous to Lieven et al.\cite{de2007fourth}, we prove that the CPS tensor has a decomposition based on matrix as following.
\begin{theorem}\label{thm:main}
If $\mathcal{A}\in C^{n^4}$ is a conjugate partial-symmetric tensor, then it can be decomposed as,
\begin{equation}
\mathcal{A}=\sum\limits_{i=1}^r\lambda_iE_i\circ \overline{E_i},
\end{equation}
where $\lambda_i\in R$, $E_i\in C^{n^2\times n^2}$ are symmetric matrices and $\left<E_i,E_j\right>=\delta_{ij}$, for $i,~j=1,2,\cdots,r$. And the decomposition is unique when $\lambda_i$ are different from each other.
\end{theorem}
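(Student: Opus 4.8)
The plan is to reduce the statement about the fourth-order CPS tensor $\mathcal{A}$ to a spectral statement about its square unfolding matrix $M(\mathcal{A})$. First I would observe that the partial-symmetry $\mathcal{A}_{ijkl}=\mathcal{A}_{\pi(ij)\pi(kl)}$ together with the conjugate relation $\mathcal{A}_{ijkl}=\overline{\mathcal{A}_{klij}}$ translates, under the unfolding map, into precise algebraic constraints on $M=M(\mathcal{A})\in C^{n^2\times n^2}$. Concretely, writing $M_{(j-1)n+i,\,(l-1)n+k}=\mathcal{A}_{ijkl}$, the relation $\mathcal{A}_{ijkl}=\overline{\mathcal{A}_{klij}}$ gives $M_{(j-1)n+i,\,(l-1)n+k}=\overline{M_{(l-1)n+k,\,(j-1)n+i}}$, i.e.\ $M$ is Hermitian: $M=M^{H}$. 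The symmetry $\mathcal{A}_{ijkl}=\mathcal{A}_{jikl}=\mathcal{A}_{ijlk}$ says that $M$ is unchanged when we apply the index-swap permutation $P$ on $C^{n^2}$ sending coordinate $(j-1)n+i$ to $(i-1)n+j$, both on rows and on columns: $PMP=M$ (note $P=P^{T}=P^{-1}$).

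Next I would apply the spectral theorem to the Hermitian matrix $M$: $M=\sum_{i=1}^{r}\lambda_i\, u_i u_i^{H}$ with $\lambda_i\in R$, $r=\operatorname{rank}(M)$, and $\{u_i\}$ orthonormal in $C^{n^2}$. The key point is to fold each eigenvector $u_i$ back into a matrix $E_i\in C^{n^2}$ via $(E_i)_{st}=(u_i)_{(t-1)n+s}$ — this is exactly the inverse of the unfolding — so that $u_i u_i^{H}$ folds to $E_i\circ\overline{E_i}$ (here $\overline{E_i}$ is the entrywise conjugate, matching $\bar u_i$ in $u_i u_i^{H}$), and the orthonormality $u_i^{H}u_j=\delta_{ij}$ becomes $\langle E_i,E_j\rangle=\delta_{ij}$ in the matrix inner product. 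Hence $\mathcal{A}=\sum_{i=1}^{r}\lambda_i E_i\circ\overline{E_i}$. It then remains to show each $E_i$ can be taken symmetric. When the $\lambda_i$ are distinct, each eigenspace is one-dimensional and $P$-invariant (because $PMP=M$), so $Pu_i=\pm u_i$; I would argue the sign must be $+1$ — for instance by noting $\mathcal{A}_{iikl}$ forces the relevant diagonal-type entries to be consistent, or more cleanly that $Pu_i=-u_i$ would force $(u_i)_{(t-1)n+s}=-(u_i)_{(s-1)n+t}$, in particular all entries $(u_i)_{(s-1)n+s}=0$, and then re-examining $PMP=M$ on the remaining structure; $Pu_i=u_i$ is precisely the statement that $E_i$ is symmetric. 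For repeated $\lambda_i$, the eigenspace is still $P$-invariant, $P$ restricted to it is an involution, and I would diagonalize $P$ on that subspace and keep only the $+1$-eigenvectors, showing they already span enough of it to reconstruct $M$ on that eigenspace (using that the $-1$-part contributes a component to $\mathcal{A}$ that, after folding, is skew and must vanish against the symmetry of $\mathcal{A}$). Uniqueness in the distinct-$\lambda_i$ case follows from uniqueness of the spectral decomposition of $M$ up to phases on the $u_i$, and $E_i\circ\overline{E_i}$ is phase-invariant.

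The main obstacle I anticipate is the symmetry of the $E_i$: the spectral theorem alone only delivers orthonormal $u_i$, and one must genuinely exploit the partial-symmetry $PMP=M$ to force $Pu_i=u_i$ rather than merely $Pu_i=\pm u_i$, and to handle the degenerate eigenspaces where $P$ could a priori mix $+1$ and $-1$ directions. Getting the bookkeeping right between the three objects — the tensor $\mathcal{A}$, the matrix $M(\mathcal{A})$, and the folded matrices $E_i$ — and checking that "symmetric matrix" on the $E_i$ side corresponds exactly to the $+1$-eigenspace of $P$ on the vector side is where the care is needed; everything else is a direct application of the finite-dimensional spectral theorem.
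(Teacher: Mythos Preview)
Your overall strategy matches the paper's exactly: unfold $\mathcal{A}$ to the Hermitian matrix $M=M(\mathcal{A})$, apply the spectral theorem, and fold the orthonormal eigenvectors $e_i$ back into matrices $E_i$, so that $e_ie_i^{H}$ becomes $E_i\circ\overline{E_i}$ and $\langle E_i,E_j\rangle=\delta_{ij}$. The only substantive difference is in how you establish the symmetry of the $E_i$, and there you are making life harder than necessary.

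You record the partial symmetry as $PMP=M$, which only tells you that each eigenspace is $P$-invariant and leaves you with the $\pm1$ case analysis you flag as the main obstacle. But the single relation $\mathcal{A}_{ijkl}=\mathcal{A}_{jikl}$ already gives the \emph{one-sided} identity $PM=M$ (not merely the two-sided $PMP=M$). From $PM=M$ and $Me_\tau=\lambda_\tau e_\tau$ with $\lambda_\tau\neq0$ you get
\[
\lambda_\tau\,Pe_\tau \;=\; P(Me_\tau)\;=\;(PM)e_\tau\;=\;Me_\tau\;=\;\lambda_\tau e_\tau,
\]
hence $Pe_\tau=e_\tau$, i.e.\ $E_\tau$ is symmetric, with no sign ambiguity and no separate treatment of degenerate eigenspaces. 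This is precisely the paper's argument, written componentwise: from $\sum_{k,l}\mathcal{A}_{ijkl}(e_\tau)_{(l-1)n+k}=\lambda_\tau(e_\tau)_{(j-1)n+i}$ and $\mathcal{A}_{ijkl}=\mathcal{A}_{jikl}$ one reads off $(e_\tau)_{(j-1)n+i}=(e_\tau)_{(i-1)n+j}$ directly. So the obstacle you anticipate dissolves once you use $PM=M$ rather than the weaker $PMP=M$; the rest of your plan (including uniqueness via uniqueness of the spectral decomposition) is fine and coincides with the paper.
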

\begin{proof}
Since $\mathcal{A}$ is conjugate partial-symmetric, then the unfold matrix $M(\mathcal{A})$ is Hermitian, and can be decomposed as
\[
M(\mathcal{A}) = \sum_{i = 1}^r \lambda_i e_i e_i^*,\]
where $\lambda_i\in R$, $e_i\in C^{n^2}$, $i = 1,\ldots, r$  are mutually orthogonal. Folding $e_i$ into matrix $E_i$ via $(E_i)_{ij} = (e_i)_{(j-1)\times n + i}$, thus $E_i$, $i = 1,\ldots, p$ are  mutually orthogonal, that is $\langle E_i, E_j \rangle = \delta_{ij}$. In this case, we have $\mathcal{A} = \sum\limits_{i = 1}^r \lambda_i E_i\circ \bar{E_i}$.

From the eigen-decomposition of $M(\mathcal{A})$, we have $M(\mathcal{A})e_\tau = \lambda_\tau e_\tau$, for $\tau = 1,\ldots, r$ i.e., $\sum_{k,l}a_{ijkl}(e_\tau)_{(l-1)\times n + k} = \lambda_\tau (e_\tau)_{(j-1)\times n + i}$, for any $i, j = 1,\ldots, n$. Since $a_{ijkl} = a_{jikl}$, for all $i, j = 1,\ldots, n$, then $(e_\tau)_{(j-1)\times n + i} =(e_\tau)_{(i-1)\times n + j} $, thus $E_\tau$ is symmetric. The uniqueness of the decomposition follows the property of eigen-decomposition of Hermitian matrix naturally.
\end{proof}
\begin{remark}
Jiang et al.\cite{jiang2016characterizing} gave the decomposition theorem for CPS tensor like Theorem \ref{thm:matrixDecomp}. However, they established this theorem in the view of polynomial decomposition and did not explore the mutually orthogonality of matrices in the decomposition model.
\end{remark}
\begin{definition}
$\mathcal{A}\in C^{n^4}$ is a CPS tensor,
$$rank_M(\mathcal{A})=min\{r\mid\mathcal{A}=\sum\limits_{i=1}^r\lambda_iE_i\circ \bar{E}_i,\lambda_i\in R,E_i\in S^{n^2}\}.$$
\end{definition}
The $rank_M(\mathcal{A})$ is actually the strongly symmetric M-rank $rank_{ssm}(\mathcal{A})$ defined by Jiang\cite{Jiang2018Low}. For symmetric tensor $\mathcal{A}$, they also proved the equivalence between $rank_{ssm}(\mathcal{A})$ and
$$rank_{sm}(\mathcal{A})=min\{r\mid\mathcal{A}=\sum\limits_{i=1}^r\lambda_iE_i\circ E_i,\lambda_i\in R,E_i\in C^{n^2}\}.$$ This is also true for CPS fourth-order tensor.
\begin{theorem}
Let $\mathcal{A}\in C^{n^4}$ be a CPS tensor, then $rank_M(\mathcal{A})=rank_{sm}(\mathcal{A}).$
\end{theorem}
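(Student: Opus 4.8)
The plan is to show that $rank_M(\mathcal{A})$ and $rank_{sm}(\mathcal{A})$ both coincide with $rank\bigl(M(\mathcal{A})\bigr)$, the rank of the square unfolding; since $\mathcal{A}$ is conjugate partial-symmetric, $M(\mathcal{A})$ is Hermitian, so this number is well understood. Concretely I would establish the cyclic chain of inequalities
\[rank_M(\mathcal{A}) \le rank\bigl(M(\mathcal{A})\bigr) \le rank_{sm}(\mathcal{A}) \le rank_M(\mathcal{A}),\]
which pins all three together and in particular yields the claimed identity.

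First I would record the elementary unfolding identity: for matrices $X,Y\in C^{n\times n}$ one has $M(X\circ Y)=\mathrm{vec}(X)\,\mathrm{vec}(Y)^{T}$, where $\mathrm{vec}(\cdot)$ denotes the column stacking induced by the square unfolding, $\mathrm{vec}(E)_{(j-1)n+i}=E_{ij}$; in particular $M(E\circ\overline{E})=\mathrm{vec}(E)\,\mathrm{vec}(E)^{*}$ has rank at most one. Applying $M$ term by term to any admissible decomposition $\mathcal{A}=\sum_{i=1}^{r}\lambda_i E_i\circ\overline{E_i}$ with $\lambda_i\in R$ and $E_i$ square (such a decomposition is exactly what is minimized over in the definition of $rank_{sm}$) gives $M(\mathcal{A})=\sum_{i=1}^{r}\lambda_i\,\mathrm{vec}(E_i)\,\mathrm{vec}(E_i)^{*}$, so $rank\bigl(M(\mathcal{A})\bigr)\le r$; taking the infimum over such decompositions yields $rank\bigl(M(\mathcal{A})\bigr)\le rank_{sm}(\mathcal{A})$. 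The last inequality $rank_{sm}(\mathcal{A})\le rank_M(\mathcal{A})$ is then immediate, since every decomposition admissible for $rank_M$ — where the $E_i$ are in addition required to be symmetric — is in particular admissible for $rank_{sm}$.

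The only remaining inequality is $rank_M(\mathcal{A})\le rank\bigl(M(\mathcal{A})\bigr)$, and this is precisely Theorem \ref{thm:main}: writing $r_0:=rank\bigl(M(\mathcal{A})\bigr)$, the Hermitian matrix $M(\mathcal{A})$ has a spectral decomposition $M(\mathcal{A})=\sum_{i=1}^{r_0}\mu_i e_i e_i^{*}$ with nonzero real $\mu_i$ and orthonormal $e_i$, and folding each $e_i$ back into the matrix $E_i$ via $(E_i)_{st}=(e_i)_{(t-1)n+s}$ produces symmetric, mutually orthonormal matrices with $\mathcal{A}=\sum_{i=1}^{r_0}\mu_i E_i\circ\overline{E_i}$ — an admissible $rank_M$-decomposition of length exactly $r_0$. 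Hence $rank_M(\mathcal{A})\le r_0$ and the chain closes.

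I do not expect a deep obstacle: the statement is essentially a corollary of Theorem \ref{thm:main} and of the injectivity of the unfolding map $M$. The point that needs care is the conjugation bookkeeping. It is essential that each matrix factor be paired with its conjugate ($E_i\circ\overline{E_i}$, not $E_i\circ E_i$): a CPS tensor need not be real, so $M(\mathcal{A})$ is Hermitian but generally not symmetric, whereas a sum $\sum_i\lambda_i\,\mathrm{vec}(E_i)\,\mathrm{vec}(E_i)^{T}$ with $\lambda_i\in R$ is always symmetric and would force $M(\mathcal{A})$, hence $\mathcal{A}$, to be real; with the conjugate in place, both ranks are squeezed against $rank\bigl(M(\mathcal{A})\bigr)$ as above. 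A secondary routine check is that the eigenvalues $\mu_i$ delivered by Theorem \ref{thm:main} are nonzero, so that its decomposition genuinely has $r_0$ terms and not fewer, which is immediate from the spectral theorem.
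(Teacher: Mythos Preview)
Your proposal is correct and follows essentially the same approach as the paper: both arguments squeeze $rank_M(\mathcal{A})$ and $rank_{sm}(\mathcal{A})$ against $rank\bigl(M(\mathcal{A})\bigr)$, using Theorem~\ref{thm:main} for the inequality $rank_M(\mathcal{A})\le rank\bigl(M(\mathcal{A})\bigr)$ and the unfolding identity $M(E\circ\overline{E})=\mathrm{vec}(E)\,\mathrm{vec}(E)^{*}$ for $rank\bigl(M(\mathcal{A})\bigr)\le rank_{sm}(\mathcal{A})$. Your write-up is more explicit about the conjugation bookkeeping, but the underlying logic matches the paper's short proof.
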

\begin{proof}
It is obvious that $rank_M(\mathcal{A})\ge rank_{SM}(\mathcal{A}).$ On the other hand, if $rank_{SM}(\mathcal{A})=r$, we have $rank(M(\mathcal{A}))\le r$. Since $rank(M(\mathcal{A}))=rank_M(\mathcal{A})$, we obtain the desired conclusion.
\end{proof}
\begin{corollary}\label{thm:matrixDecomp}
Let $\mathcal{A}\in R^{n^4}$ be a partial-symmetric tensor, then one has,
\begin{equation}
\mathcal{A}=\sum\limits_{i=1}^{r}\lambda_iE_i\circ E_i,
\end{equation}
where $\lambda_i\in R$, $E_i$ are symmetric matrices and $\left<E_i,E_j\right>=\delta_{ij}$, for $i,~j=1,2,\cdots,r$. $rank_M(\mathcal{A})\le\frac{n(n+1)}{2}$.
\end{corollary}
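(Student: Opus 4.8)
The plan is to obtain the statement as the real specialization of Theorem~\ref{thm:main} together with a dimension count. First I would observe that a real partial-symmetric tensor is, in particular, a CPS tensor: over $R$ conjugation is trivial, so the relations $\mathcal{A}_{ijkl}=\mathcal{A}_{klij}=\overline{\mathcal{A}}_{klij}$ and $\mathcal{A}_{ijkl}=\mathcal{A}_{\pi(ij)\pi(kl)}$ are exactly the defining relations of a CPS tensor. Hence Theorem~\ref{thm:main} applies and yields $\mathcal{A}=\sum_{i=1}^{r}\lambda_i E_i\circ\overline{E_i}$ with $\lambda_i\in R$, the $E_i$ symmetric, $\langle E_i,E_j\rangle=\delta_{ij}$, and $r=rank(M(\mathcal{A}))$.

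Next I would upgrade the $E_i$ to real matrices. Since $\mathcal{A}$ is real, $M(\mathcal{A})$ is a real symmetric matrix, so its spectral decomposition $M(\mathcal{A})=\sum_{i=1}^{r}\lambda_i e_i e_i^{T}$ can be taken with real orthonormal eigenvectors $e_i\in R^{n^2}$. Folding $e_i$ into $E_i$ by $(E_i)_{st}=(e_i)_{(t-1)n+s}$ then produces real symmetric matrices --- the symmetry being obtained exactly as in the proof of Theorem~\ref{thm:main}, by substituting $\mathcal{A}_{ijkl}=\mathcal{A}_{jikl}$ into the eigen-relation $M(\mathcal{A})e_\tau=\lambda_\tau e_\tau$ and dividing by $\lambda_\tau\neq 0$ --- and since $\overline{E_i}=E_i$ the decomposition reads $\mathcal{A}=\sum_{i=1}^{r}\lambda_i E_i\circ E_i$ with $\langle E_i,E_j\rangle=\delta_{ij}$, as asserted.

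Finally I would bound the rank. The matrices $E_1,\ldots,E_r$ all lie in $S^n$, which has dimension $\frac{n(n+1)}{2}$; being orthonormal they are linearly independent, so $r\le\frac{n(n+1)}{2}$. Since this $r$ already realizes a decomposition of the form appearing in the definition of $rank_M$, we conclude $rank_M(\mathcal{A})\le r\le\frac{n(n+1)}{2}$.

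I do not expect a serious obstacle, as everything reduces to the already-established Theorem~\ref{thm:main}; the only point needing a little care is that the folded matrix $E_\tau$ is guaranteed to be symmetric only when $\lambda_\tau\neq 0$, which is why one works with the nonzero part of the spectrum --- equivalently, keeps exactly $r=rank(M(\mathcal{A}))$ terms --- before invoking the dimension count.
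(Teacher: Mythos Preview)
Your proposal is correct and follows essentially the same route as the paper: deduce the decomposition from Theorem~\ref{thm:main} and then bound $rank_M(\mathcal{A})$ by the dimension $\frac{n(n+1)}{2}$ of $S^n$. You are in fact more careful than the paper, which simply declares the first part ``obvious'' without addressing the realness of the $E_i$; your observation that $M(\mathcal{A})$ is real symmetric and hence admits a real orthonormal eigenbasis is the right way to fill that gap.
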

\begin{proof}
The first part is obvious according to Theorem \ref{thm:matrixDecomp}. Since all matrices belonging to $S^n$ form a $\frac{n(n+1)}{2}$-dimensional vector space, we have $rank_M(\mathcal{A})\le\frac{n(n+1)}{2}$.
\end{proof}
Fu et al. gave a rank-one decomposition of vector form for the CPS tensor based on Theorem \ref{thm:main} as follows,
\begin{theorem}\cite[Theorem 3.2]{fu2018decompositions}
$\mathcal{A}\in C^{n^4}$ is CPS if and only if it has the following partial-symmetric decomposition
\[\mathcal{A}=\sum\limits_i\lambda_i\bar{a}_i\circ \bar{a}_i\circ a_i\circ a_i,\]
where $\lambda_j\in R$ and $a_i\in C^n$. That is, a CPS tensor can be decomposed as the sum of rank-one CPS tensors.
\end{theorem}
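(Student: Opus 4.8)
The statement is an equivalence, and I would prove the two directions in turn. The \emph{if} direction is a direct verification: if $\mathcal{A}=\sum_i\lambda_i\,\bar a_i\circ\bar a_i\circ a_i\circ a_i$ with $\lambda_i\in R$ and $a_i\in C^n$, then $\mathcal{A}_{pqkl}=\sum_i\lambda_i(\bar a_i)_p(\bar a_i)_q(a_i)_k(a_i)_l$, which is unchanged under $p\leftrightarrow q$ and under $k\leftrightarrow l$, so $\mathcal{A}_{pqkl}=\mathcal{A}_{\pi(pq)\pi(kl)}$; and, since $\lambda_i\in R$, conjugating the $(k,l,p,q)$-entry of the $i$-th summand gives $\overline{\lambda_i(\bar a_i)_k(\bar a_i)_l(a_i)_p(a_i)_q}=\lambda_i(\bar a_i)_p(\bar a_i)_q(a_i)_k(a_i)_l$, i.e.\ its $(p,q,k,l)$-entry, so $\mathcal{A}_{pqkl}=\overline{\mathcal{A}_{klpq}}$. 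Hence $\mathcal{A}$ is CPS.

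For the converse, I would build on the matrix outer product decomposition of Theorem~\ref{thm:main}: $\mathcal{A}=\sum_{\tau=1}^{r}\lambda_\tau\,E_\tau\circ\overline{E_\tau}$ with $\lambda_\tau\in R$ and $E_\tau\in S^n$. For any $c\in C^n$ one has $(cc^{T})\circ\overline{(cc^{T})}=\bar a\circ\bar a\circ a\circ a$ with $a=\bar c$ (since $(cc^{T})_{ij}\,\overline{(cc^{T})_{kl}}=c_ic_j\bar c_k\bar c_l=(\bar a)_i(\bar a)_j(a)_k(a)_l$), so it is already a rank-one CPS tensor of the required shape; it therefore suffices to write each term $\lambda_\tau\,E_\tau\circ\overline{E_\tau}$ as a finite \emph{real} linear combination of tensors $(cc^{T})\circ\overline{(cc^{T})}$. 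To that end I would apply the Autonne--Takagi factorization to the symmetric matrix $E_\tau$ (orthogonality of the factors is not needed, only that $E_\tau$ is a sum of symmetric rank-one matrices), writing $E_\tau=\sum_s b_sb_s^{T}$, so that
\[
E_\tau\circ\overline{E_\tau}=\sum_{s}(b_sb_s^{T})\circ\overline{(b_sb_s^{T})}\;+\;\sum_{s<t}\Big[(b_sb_s^{T})\circ\overline{(b_tb_t^{T})}+(b_tb_t^{T})\circ\overline{(b_sb_s^{T})}\Big],
\]
in which the diagonal terms already have the desired rank-one form.

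The step I expect to be the main obstacle is the off-diagonal terms. An individual cross term $(b_sb_s^{T})\circ\overline{(b_tb_t^{T})}$ is \emph{not} CPS --- the conjugate-symmetry relation interchanges the two index blocks, hence the two Takagi vectors $b_s,b_t$ --- so it cannot be expanded over rank-one CPS tensors with real coefficients; only the symmetrized bracket above is CPS. I would remove this obstruction by a polarization identity over roots of unity. Fixing $s<t$, set $\alpha_j=e^{2\pi ij/N}$ with $N\ge5$ and $c_j=b_s+\alpha_j b_t$. Expanding $c_jc_j^{T}=b_sb_s^{T}+\alpha_j(b_sb_t^{T}+b_tb_s^{T})+\alpha_j^{2}\,b_tb_t^{T}$, multiplying by $\overline{c_jc_j^{T}}$, and averaging the result against $\alpha_j^{2}+\alpha_j^{-2}=2\cos(4\pi j/N)$ --- with $N\ge5$ ensuring, via $\tfrac{1}{N}\sum_{j=1}^{N}\alpha_j^{m}=1$ for $N\mid m$ and $0$ otherwise, that only the two desired Fourier modes survive --- one arrives at
\[
(b_sb_s^{T})\circ\overline{(b_tb_t^{T})}+(b_tb_t^{T})\circ\overline{(b_sb_s^{T})}=\frac{1}{N}\sum_{j=1}^{N}2\cos\!\Big(\frac{4\pi j}{N}\Big)\,(c_jc_j^{T})\circ\overline{(c_jc_j^{T})},
\]
a real linear combination of rank-one tensors of the required form.

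Substituting this identity back into the expansion of $E_\tau\circ\overline{E_\tau}$ and summing over $\tau$ exhibits $\mathcal{A}$ as a finite real linear combination of tensors $(cc^{T})\circ\overline{(cc^{T})}=\bar a\circ\bar a\circ a\circ a$; absorbing each (real, possibly negative) scalar into the coefficient $\lambda_i$ --- this is necessary because rescaling $c\mapsto tc$ changes such a term only by the positive factor $|t|^{4}$ --- gives $\mathcal{A}=\sum_i\lambda_i\,\bar a_i\circ\bar a_i\circ a_i\circ a_i$ with $\lambda_i\in R$, which is the asserted decomposition.
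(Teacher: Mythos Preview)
The paper does not supply its own proof of this statement; it is quoted verbatim from \cite{fu2018decompositions}, with only the remark that the result is obtained ``based on Theorem~\ref{thm:main}''. So there is nothing in the present paper to compare line by line.

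Your argument is correct and is consistent with that hint. The \emph{if} direction is the routine verification you give. For the \emph{only if} direction you start, as the paper indicates one should, from the matrix outer product decomposition $\mathcal{A}=\sum_\tau\lambda_\tau E_\tau\circ\overline{E_\tau}$ of Theorem~\ref{thm:main}, apply the Autonne--Takagi factorization $E_\tau=\sum_s b_sb_s^{T}$ to each complex symmetric $E_\tau$, and then handle the symmetrized cross terms by a polarization over $N$th roots of unity with real weights $2\cos(4\pi j/N)$. That identity checks out: writing $c_jc_j^{T}=A+\alpha_jB+\alpha_j^{2}C$ with $A=b_sb_s^{T}$, $B=b_sb_t^{T}+b_tb_s^{T}$, $C=b_tb_t^{T}$, the nine terms in $(c_jc_j^{T})\circ\overline{(c_jc_j^{T})}$ carry powers $\alpha_j^{m}$ with $m\in\{-2,-1,0,1,2\}$, and for $N\ge5$ the average of $\alpha_j^{m}(\alpha_j^{2}+\alpha_j^{-2})$ is $1$ for $m=\pm2$ and $0$ otherwise, isolating exactly $A\circ\bar C+C\circ\bar A$. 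The final remark that signs cannot be absorbed into the vectors (since $c\mapsto tc$ scales the rank-one term by $|t|^{4}>0$) is also to the point and explains why the $\lambda_i$ must be allowed to be real of either sign.
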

However, when we restricted the decomposition on real domain, the decomposition does not seem to hold, since $\sum\limits_i\lambda_i a_i\circ a_i\circ a_i\circ a_i$, where $\lambda_i\in R$, $a_i\in R^n$, can only represent symmetric tensor. Thus, an extended rank-one approximation model for the partial-symmetric tensor can be proposed based on Corollary \ref{thm:matrixDecomp}.

\begin{corollary}\label{thm:vecdecomp}
Let $\mathcal{A}\in R^{n^4}$ be a partial-symmetric tensor, then it can be decomposed as the sum of simple low rank partial-symmetric tensor,
\begin{equation}\label{equ:vecdecomp}
\mathcal{A}=\sum\limits_i\lambda_i(p_i\circ p_i\circ q_i\circ q_i+q_i\circ q_i\circ p_i\circ p_i).
\end{equation}
\end{corollary}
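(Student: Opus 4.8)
The plan is to start from the matrix outer product decomposition of Corollary \ref{thm:matrixDecomp}, namely $\mathcal{A}=\sum_{i=1}^r\lambda_iE_i\circ E_i$ with each $E_i\in S^n$ real symmetric, and then diagonalize each $E_i$ to turn the symmetric-matrix outer products into sums of the desired four-fold vector outer products. Concretely, for a fixed $i$ I would write the spectral decomposition $E_i=\sum_{k=1}^n\mu_{ik}\,u_{ik}u_{ik}^T$ with $\mu_{ik}\in R$ and $u_{ik}\in R^n$ orthonormal. The key observation is that folding behaves well under this: if $E=uu^T$ then, reading off the definition $(E)_{st}=(e)_{(t-1)n+s}$, the vector $e$ that unfolds to $E$ is exactly $u\otimes u$, so $E\circ E$ as a matrix outer product corresponds, after unfolding both factors, to $(u\otimes u)\circ(u\otimes u)$, which is the fourth-order tensor with entries $u_su_tu_ku_l$, i.e. $u\circ u\circ u\circ u$.

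The second step is to expand $E_i\circ E_i=\big(\sum_k\mu_{ik}u_{ik}u_{ik}^T\big)\circ\big(\sum_l\mu_{il}u_{il}u_{il}^T\big)=\sum_{k,l}\mu_{ik}\mu_{il}\,(u_{ik}u_{ik}^T)\circ(u_{il}u_{il}^T)$. Translating each term through the folding correspondence above, $(u_{ik}u_{ik}^T)\circ(u_{il}u_{il}^T)$ becomes the tensor with entries $(u_{ik})_s(u_{ik})_t(u_{il})_k(u_{il})_l$; pairing the $(k,l)$ and $(l,k)$ terms in the double sum yields exactly a combination of the form $p\circ p\circ q\circ q+q\circ q\circ p\circ p$ with $p=u_{ik}$, $q=u_{il}$ (the diagonal $k=l$ terms give $p\circ p\circ p\circ p$, which is the special case $p=q$). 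Collecting all $(i,k,l)$ into a single index and setting the scalar weights to $\lambda_i\mu_{ik}\mu_{il}\in R$, one obtains precisely the form \eqref{equ:vecdecomp}. One should double-check that the partial symmetry of $\mathcal{A}$ is what guarantees the $E_i$ are symmetric (this is Corollary \ref{thm:matrixDecomp}), so that the spectral decomposition over $R$ is available; without symmetry the $E_i$ need not be diagonalizable by an orthogonal change of basis.

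I expect the main obstacle to be purely bookkeeping rather than conceptual: carefully verifying that the index conventions of the square unfolding $M(\cdot)$ and of the folding map $(E_i)_{st}=(e_i)_{(t-1)n+s}$ are applied consistently, so that a rank-one symmetric matrix $uu^T$ really does unfold to the Kronecker vector $u\otimes u$ and hence $uu^T\circ uu^T$ really corresponds to $u\circ u\circ u\circ u$ after both factors are unfolded. Once that correspondence is pinned down, the rest is the elementary expansion of the outer product of two spectral sums and a relabelling of indices; the symmetrized pairing $p\circ p\circ q\circ q+q\circ q\circ p\circ p$ appears automatically because $E_i\circ E_i$ is symmetric in swapping its two matrix slots, which on the tensor level is exactly the swap $(ij)\leftrightarrow(kl)$.
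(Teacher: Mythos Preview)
Your proposal is correct and follows essentially the same route as the paper: invoke Corollary~\ref{thm:matrixDecomp} to write $\mathcal{A}=\sum_i\lambda_iE_i\circ E_i$ with $E_i$ real symmetric, spectrally decompose each $E_i=\sum_j\beta_i^j u_i^j(u_i^j)^\top$, expand the double sum, and pair the $(j,k)$ and $(k,j)$ terms. The only superfluous part is your detour through the unfolding map and Kronecker vectors; since the paper defines $X\circ Y$ by $(X\circ Y)_{ijkl}=X_{ij}Y_{kl}$, the identity $(uu^\top)\circ(vv^\top)=u\circ u\circ v\circ v$ is immediate from the definition and no folding bookkeeping is needed.
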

\begin{proof}
From Corollary \ref{thm:matrixDecomp}, partial-symmetric tensor $\mathcal{A} = \sum\limits_{i = 1}^r \lambda_i E_i\circ E_i$, where $E_i$ are symmetric. So it can be decomposed as $\sum_{j = 1}^{r_i}\beta_i^j u_i^j(u_i^j)^\top$, thus
\[
\begin{aligned}
\mathcal{A} &= &&\sum\limits_{i = 1}^r \lambda_i (\sum_{j = 1}^{r_i} \beta_i^j u_i^j(u_i^j)^\top)\circ (\sum_{k = 1}^{r_i} \beta_i^k u_i^k(u_i^k)^\top)  \\
                  & = &&\sum_{i = 1}^r \lambda_i (\sum_{j = 1}^{r_i}\sum_{k = j}^{r_i} \beta_i^j \beta_i^k(u_i^j \circ u_i^j \circ u_i^k \circ u_i^k + u_i^k \circ u_i^k \circ u_i^j \circ u_i^j).
\end{aligned}
\]
The desired decomposition form follows.
\end{proof}
\begin{remark}
From the proof of Corollary \ref{thm:vecdecomp}, we can see that if $p_i\neq q_i$, $p_i^Tq_i=0$. Whether this decomposition form is the compactest will be one of our future work.
\end{remark}

We can discuss the case of skew partial-symmetric tensor in parallel.
\begin{theorem}\label{thm:skew}
We call $\mathcal{A}\in R^{n^4}$ skew partial-symmetric tensor if
\begin{equation*}
\mathcal{A}_{ijkl}=\mathcal{A}_{\pi(ij)\pi(kl)}=-\mathcal{A}_{\pi(kl)\pi(ij)},\quad i,~j,~k,~l=1,2,\cdots, n.
\end{equation*}
Then one has
\begin{equation*}
\mathcal{A}=\sum\limits_{i}\lambda_i(U_i\circ V_i-V_i\circ U_i),
\end{equation*}
and
\begin{equation*}
\mathcal{A}=\sum\limits_i\lambda_i(p_i\circ p_i\circ q_i\circ q_i-q_i\circ q_i\circ p_i\circ p_i).
\end{equation*}
\end{theorem}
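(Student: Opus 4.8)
The plan is to run the argument of Theorem~\ref{thm:main} and Corollary~\ref{thm:vecdecomp} again, with the Hermitian (respectively real symmetric) spectral theorem replaced by the real canonical form of a skew-symmetric matrix. First I would record what the two defining symmetries say about the square unfolding $M(\mathcal{A})$. The relation $\mathcal{A}_{ijkl}=-\mathcal{A}_{klij}$ is exactly $M(\mathcal{A})^\top=-M(\mathcal{A})$, so $M(\mathcal{A})$ is a real skew-symmetric matrix. The relation $\mathcal{A}_{ijkl}=\mathcal{A}_{\pi(ij)\pi(kl)}$ says that, after folding a row or a column of $M(\mathcal{A})$ back into an $n\times n$ block, that block is symmetric; equivalently, if $P$ is the orthogonal involution on $R^{n^2}$ that swaps the two folded indices, then $PM(\mathcal{A})=M(\mathcal{A})P=M(\mathcal{A})$. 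Hence $M(\mathcal{A})$ acts nontrivially only on the $\tfrac{n(n+1)}{2}$-dimensional fixed subspace of $P$, i.e.\ the space of vectors that fold to symmetric matrices.

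Next I would apply the real canonical form for skew-symmetric operators on this invariant subspace: there exist an orthonormal system $\{u_i,v_i\}$ of vectors that fold to symmetric matrices, together with scalars $\mu_i\in R$, such that $M(\mathcal{A})=\sum_i\mu_i\bigl(u_iv_i^\top-v_iu_i^\top\bigr)$. (One can instead pass to the complex eigen-decomposition of $M(\mathcal{A})$, whose nonzero eigenvalues come in pairs $\pm i\mu_i$, and take the real and imaginary parts of the eigenvectors; because $M(\mathcal{A})$ commutes with $P$ these eigenvectors may be chosen in the fixed subspace of $P$.) Folding $u_i,v_i$ into matrices $U_i,V_i$ exactly as in the decomposition~(\ref{equ:general}), each $U_i$ and $V_i$ is symmetric, the whole family is orthonormal for the matrix inner product, and since folding turns $uv^\top$ into the matrix outer product $U\circ V$, we obtain $\mathcal{A}=\sum_i\mu_i\bigl(U_i\circ V_i-V_i\circ U_i\bigr)$, which is the first claimed form.

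For the second decomposition I would substitute the spectral decompositions $U_i=\sum_j\alpha_i^j p_i^j(p_i^j)^\top$ and $V_i=\sum_k\gamma_i^k q_i^k(q_i^k)^\top$ into each term $U_i\circ V_i-V_i\circ U_i$ and use the identity $(pp^\top)\circ(qq^\top)=p\circ p\circ q\circ q$; collecting all terms gives $\mathcal{A}=\sum\lambda\,\bigl(p\circ p\circ q\circ q-q\circ q\circ p\circ p\bigr)$, which is the skew analogue of the computation in Corollary~\ref{thm:vecdecomp} and is purely formal.

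The main obstacle is the first step: one must verify that the two structures carried by $M(\mathcal{A})$---skew-symmetry and $P$-invariance---are compatible, so that the orthogonal change of basis that block-diagonalizes the skew-symmetric matrix can be chosen to preserve the fixed subspace of $P$. This is what guarantees that the resulting $U_i,V_i$ are genuine symmetric matrices rather than arbitrary ones; once it is in place, the rest of the proof parallels the partial-symmetric case verbatim.
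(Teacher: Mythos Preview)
Your proposal is correct and follows the same route as the paper: the paper's proof consists of the single observation that $M(\mathcal{A})$ is skew-symmetric, writes $M(\mathcal{A})=\sum_i\lambda_i(u_iv_i^\top-v_iu_i^\top)$, and then declares the rest ``similar to the partial-symmetric case.'' Your write-up is simply a fleshed-out version of this, and the ``obstacle'' you flag is not a real one---since $PM(\mathcal{A})=M(\mathcal{A})P=M(\mathcal{A})$, the operator $M(\mathcal{A})$ vanishes on the $(-1)$-eigenspace of $P$ and maps into the $(+1)$-eigenspace, so the canonical form can be taken entirely within the symmetric-matrix subspace, exactly as you say.
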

\begin{proof}
$M(\mathcal{A})$ is skew-symmetric according to the definition of the skew partial-symmetric tensor. Then $M(\mathcal{A})=\sum_{i}\lambda_i(u_iv_i^T-v_iu_i^T)$. The rest of the proof is similar to that for partial-symmetric tensor, here we omit it.
\end{proof}

Based on Theorem \ref{thm:main}, we propose a matrix outer product approximation model for the CPS tensor as following.
\begin{equation}\label{equ:model_matrix}
\begin{aligned}
\underset{\lambda_i\in R,~X_i\in S^n}{min}&\|\mathcal{A}-\sum\limits_{i=1}^r \lambda_iX_i\circ \bar{X_i}\|^2_F\\
s.t. & \left<X_i,X_j\right>=\delta_{ij}.
\end{aligned}
\end{equation}

The successive rank-one approximation algorithm can also be applied to the conjugate partial symmetric tensors to find the matrix outer product decompositions or approximations, as shown in Algorithm \ref{alg:SMROA}.

\begin{algorithm}
    \caption{Successive Matrix Outer Product Rank-One Approximation (SMROA) Algorithm}\label{alg:SMROA}
     Given a CPS tensor $\mathcal{A}\in C^{n^4}$. Initialize $\mathcal{A}_0=\mathcal{A}$.
    \begin{algorithmic}
    \For {$j=1$ to $r$}
        \State $(\lambda_j,X_j)\in \underset{\|X\|_F=1,X\in S^n,\lambda\in R}{argmin}\|\mathcal{A}_{j-1}-\lambda X\circ \bar{X}\|_F$.
        \State $\mathcal{A}_{j}=\mathcal{A}_{j-1}-\lambda_j X_j\circ\bar{X_j}$.
    \EndFor
    \State Return $\{\lambda_j,X_j\}_{j=1}^{r}$
    \end{algorithmic}
\end{algorithm}

The main optimization problem in Algorithm \ref{alg:SMROA} could be expressed as
\begin{equation}\label{equ:onestep}
(\lambda_*,X_*)\in \underset{\|X\|_F=1,X\in S^n,\lambda\in R}{argmin}\|\mathcal{A}-\lambda X\circ \bar{X}\|_F^2,
\end{equation}
The objective function of \eqref{equ:onestep} can be rewritten as
\begin{equation*}
\begin{split}
&\|\mathcal{A}-\lambda X\circ \bar{X}\|_F^2\\
=&\|\mathcal{A}\|_F^2+\lambda^2-2\lambda\left<\mathcal{A},X\circ \bar{X}\right>
\end{split}
\end{equation*}
From which we can derive that problem \eqref{equ:onestep} is equivalent to
\begin{equation}\label{equ:main1}
X_*\in\underset{\|X\|_F=1,X\in S^n}{argmax}\mid\left<\mathcal{A},X\circ\bar{X}\right>\mid,
\end{equation}
and $\lambda_*=\left<\mathcal{A},X_*\circ\bar{X_*}\right>$.
We can solve \eqref{equ:main1} by transforming it into matrix eigenproblem as follows,
\begin{equation}\label{matrix_eigen}
x_*\in \underset{\|x\|=1,x\in C^{n^2}}{argmax}\mid x^*M(\mathcal{A})x\mid.
\end{equation}
\begin{remark}
Zhang et al.\cite{zhang} proved that if $\mathcal{A}\in R^{n^4}$ is symmetric,
\[\underset{\substack{\|x_i\|=1\\i=1,2,3,4}}{min}\|\mathcal{A}-\lambda x_1\circ x_2\circ x_3\circ x_4\|_F=\underset{\|x\|=1}{min}\|\mathcal{A}-\lambda x\circ x\circ x\circ x\|_F;\]
if $\mathcal{A}$ is symmetric about the first two and the last two mode respectively,
\[\underset{\substack{\|x_i\|=1\\i=1,2,3,4}}{min}\|\mathcal{A}-\lambda x_1\circ x_2\circ x_3\circ x_4\|_F=\underset{\|x\|=\|y\|=1}{min}\|\mathcal{A}-\lambda x\circ x\circ y\circ y\|_F.\]
It is obvious that for partial-symmetric tensor, we also have
\[\underset{\|X_i\|_F=1,X_i\in R^{n^2},\lambda\in R}{min}\|\mathcal{A}-\lambda X_1\circ X_2\|_F=\underset{\|X\|_F=1,X\in S^n,\lambda\in R}{min}\|\mathcal{A}-\lambda X\circ X\|_F.\]
\end{remark}
\begin{remark}
It is well-known that \eqref{equ:onestep} is equivalent to the nearset Kronecker product problem \cite{golub2013matrix} as below
\begin{equation*}
(\lambda_*,X_*)\in \underset{\|X\|_F=1,X\in S^n,\lambda\in R}{argmin}\|A-\lambda X\otimes \bar{X}\|_F^2,
\end{equation*}
where $A_{(i-1)n+k,(j-1)n+l}=\mathcal{A}_{ijkl}$, "$\otimes$" denotes the kronecker product of matrices.
\end{remark}
\section{Exact Recovery for CPS tensors}\label{sec:theory}
In this section, we give the theoretical analysis of exact recovery for CPS tensors by the SMROA algorithm.
\begin{theorem}\label{thm:recover}
Let $\mathcal{A}$ be a CPS tensor with $rank_{M}(\mathcal{A})=r$, that is  \[\mathcal{A}=\sum\limits_{i=1}^r\lambda_iE_i\circ\bar{E_i}.\]
If $\lambda_i$ are different from each other, then the SMROA algorithm will obtain the exact decomposition of $\mathcal{A}$ after $r$ iterations.
\end{theorem}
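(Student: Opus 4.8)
The plan is to analyze a single iteration of Algorithm \ref{alg:SMROA} as a Hermitian eigenvalue problem and then induct on the iteration index, showing that the $j$-th step peels off exactly one summand $\lambda_i E_i\circ\overline{E_i}$ from the unfolding of $\mathcal{A}$.

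The first ingredient is that every residual $\mathcal{A}_j$ produced by the algorithm is again CPS. Indeed $\mathcal{A}_0=\mathcal{A}$ is CPS, and for symmetric $X$ the tensor $X\circ\bar X$ (with $(X\circ\bar X)_{ijkl}=X_{ij}\bar X_{kl}$) is invariant under $\pi(ij)$ and $\pi(kl)$ and satisfies $\overline{(X\circ\bar X)_{klij}}=(X\circ\bar X)_{ijkl}$, hence is CPS; so $\mathcal{A}_j=\mathcal{A}_{j-1}-\lambda_j X_j\circ\bar X_j$ is CPS by induction and $M(\mathcal{A}_j)$ is Hermitian. Consequently, exactly as in the proof of Theorem \ref{thm:main}, any eigenvector of $M(\mathcal{A}_j)$ with a nonzero eigenvalue folds into a symmetric matrix. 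Next, by the reduction carried out in Section \ref{sec:model} applied to $\mathcal{A}_{j-1}$, the $j$-th subproblem is equivalent to maximizing $|\langle\mathcal{A}_{j-1},X\circ\bar X\rangle|$ over $X\in S^n$ with $\|X\|_F=1$; under the folding $X\leftrightarrow x$ one has $\|X\|_F=\|x\|$, $M(X\circ\bar X)=xx^*$ and $\langle\mathcal{A}_{j-1},X\circ\bar X\rangle=x^*M(\mathcal{A}_{j-1})x$, so this is the eigenproblem \eqref{matrix_eigen} with $H:=M(\mathcal{A}_{j-1})$. Expanding $x=\sum_t c_t e_t$ in an orthonormal eigenbasis of $H$ (eigenvalues $\mu_t$) gives $x^*Hx=\sum_t\mu_t|c_t|^2$, a convex combination of the $\mu_t$, so $|x^*Hx|\le\max_t|\mu_t|$ with equality exactly for unit eigenvectors whose eigenvalue equals $\pm\max_t|\mu_t|$. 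This is where the hypothesis enters: since the nonzero eigenvalues $\lambda_1,\dots,\lambda_r$ of $M(\mathcal{A})$ are pairwise distinct, each such eigenspace is one-dimensional, so the maximizer is, up to a unit-modulus scalar, a single $e_i$; its fold $X_j$ then equals $E_i$ (the scalar being irrelevant since $X\circ\bar X$ is unchanged by it) and $\lambda_j=x^*Hx=\lambda_i$.

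Now write $M(\mathcal{A})=\sum_{i=1}^r\lambda_i e_ie_i^*$ with $\{e_i\}$ orthonormal and $\lambda_i\in R\setminus\{0\}$ pairwise distinct (as in the proof of Theorem \ref{thm:main}), and let $E_i$ be the fold of $e_i$, so $M(E_i\circ\overline{E_i})=e_ie_i^*$. The induction claim is: after $j$ steps there is $S_j\subseteq\{1,\dots,r\}$ with $|S_j|=j$ such that $M(\mathcal{A}_j)=\sum_{i\notin S_j}\lambda_i e_ie_i^*$. This is trivial for $j=0$. For the step, apply the preceding analysis to $H=M(\mathcal{A}_{j-1})=\sum_{i\notin S_{j-1}}\lambda_i e_ie_i^*$: the algorithm returns $(\lambda_{i^*},E_{i^*})$ for some $i^*\notin S_{j-1}$ with $|\lambda_{i^*}|$ maximal over $\{|\lambda_i|:i\notin S_{j-1}\}$, hence $M(\mathcal{A}_j)=M(\mathcal{A}_{j-1})-\lambda_{i^*}e_{i^*}e_{i^*}^*=\sum_{i\notin S_{j-1}\cup\{i^*\}}\lambda_i e_ie_i^*$, so $S_j:=S_{j-1}\cup\{i^*\}$ works. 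Taking $j=r$ forces $M(\mathcal{A}_r)=0$; since $M(\cdot)$ is a linear bijection of $C^{n^4}$ onto $C^{n^2\times n^2}$, $\mathcal{A}_r=0$, so $\mathcal{A}=\sum_{l=1}^r\lambda_l X_l\circ\bar X_l$ with $\{(\lambda_l,X_l)\}_{l=1}^r=\{(\lambda_i,E_i)\}_{i=1}^r$ up to reordering and phases — precisely the decomposition, obtained after $r$ iterations.

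The main obstacle I anticipate is the equivalence, at each residual, between the symmetric-matrix problem \eqref{equ:main1} and the unconstrained eigenproblem \eqref{matrix_eigen}: one must characterize all maximizers of $|x^*Hx|$ on the unit sphere, check that the selected maximizer folds back into a symmetric matrix (which is why preserving the CPS property of $\mathcal{A}_{j-1}$ is needed), and pin down precisely the point — pairwise distinctness forcing one-dimensional eigenspaces — at which exact recovery, as opposed to recovery of merely some decomposition, is guaranteed.
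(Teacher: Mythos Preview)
Your proposal is correct and follows essentially the same approach as the paper: the paper isolates the single-step analysis as Lemma~\ref{lem:onestep} (expanding $\hat X_1$ in an orthonormal basis of $S^n$ and using the convex-combination bound together with optimality to force $\hat X_1=e^{i\theta}E_j$), and then inducts on the residual exactly as you do. The only stylistic difference is that you carry out the single-step analysis explicitly at the level of the Hermitian unfolding $M(\mathcal{A}_{j-1})$ and its eigenproblem, whereas the paper phrases it in tensor/basis language; under the fold $X\leftrightarrow x$ these are the same computation, and your version makes the preservation of the CPS property of the residuals and the equivalence of \eqref{equ:main1} with \eqref{matrix_eigen} more explicit than the paper does.
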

We first claim the following lemma before proving the above theorem.
\begin{lemma}\label{lem:onestep}
Let $\mathcal{A}$ be a CPS tensor with $rank_{M}(\mathcal{A})=r$, that is
\[\mathcal{A}=\sum\limits_{i=1}^r\lambda_iE_i\circ\bar{E_i}.\]
$\lambda_i$ are different from each other. Suppose
\begin{equation*}
\hat{X}_1\in\underset{X\in S^n, \|X\|_F=1}{argmax}\mid\left<\mathcal{A},X\circ\bar{X}\right>\mid,
~\hat{\lambda}_1=\left<\mathcal{A},X\circ\bar{X}\right>.
\end{equation*}
Then, there exists $j\in\{1,2,\cdots,r\}$ such that
\[\hat{\lambda}_1=\lambda_j,~\hat{X}_1=E_j.\]
\end{lemma}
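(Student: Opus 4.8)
The plan is to translate the tensor optimization problem into the matrix eigenproblem \eqref{matrix_eigen} and exploit the spectral decomposition of $M(\mathcal{A})$. First I would observe that, by Theorem \ref{thm:main} and the construction of $M(\mathcal{A})$, the eigenvalues of the Hermitian matrix $M(\mathcal{A})$ are exactly $\lambda_1,\ldots,\lambda_r$ (with the rest zero) and the corresponding eigenvectors are the vectorizations $e_i$ of the symmetric matrices $E_i$. Since the $\lambda_i$ are distinct, the eigenspaces are one-dimensional (apart from the kernel), so the maximizers of $\lvert x^* M(\mathcal{A}) x\rvert$ over the unit sphere in $C^{n^2}$ are precisely the unit eigenvectors associated with the eigenvalue of largest modulus, i.e.\ $e_{j}$ (up to a unit-modulus scalar) where $\lvert\lambda_j\rvert=\max_i\lvert\lambda_i\rvert$, and the optimal value is $\lvert\lambda_j\rvert$.

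Next I would address the subtlety that the optimization in the lemma is over \emph{symmetric} matrices $X\in S^n$ with $\|X\|_F=1$, whereas \eqref{matrix_eigen} ranges over all unit vectors $x\in C^{n^2}$. The point, already used in the proof of Theorem \ref{thm:main}, is that $M(\mathcal{A})$ maps the subspace of vectorized symmetric matrices into itself (because $a_{ijkl}=a_{jikl}$), so the eigenvectors $e_i$ already lie in this subspace; hence restricting the Rayleigh-quotient problem to symmetric $X$ does not change the optimal value, and $\langle\mathcal{A},X\circ\bar X\rangle = x^* M(\mathcal{A}) x$ under the folding $x\leftrightarrow X$. Therefore $\hat X_1$ must be the folded eigenvector $E_j$ (up to a phase, which can be absorbed since $E_j\circ\bar E_j=(cE_j)\circ\overline{(cE_j)}$ for $\lvert c\rvert=1$), and $\hat\lambda_1=\langle\mathcal{A},\hat X_1\circ\overline{\hat X_1}\rangle = \lambda_j$; note this recovers the sign of $\lambda_j$, not just its modulus, because the objective value in \eqref{equ:onestep} forces $\hat\lambda_1 = x^* M(\mathcal{A}) x = \lambda_j$.

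I expect the main obstacle to be handling the non-uniqueness of the maximizer carefully: a priori the argmax of $\lvert x^* M(\mathcal{A}) x\rvert$ could be attained at some vector not equal to any single $e_j$ if two eigenvalues had equal modulus but opposite sign, or if there were nontrivial interaction with the kernel. The hypothesis that the $\lambda_i$ are distinct rules out repeated eigenvalues, but one still needs to argue that a maximizer cannot be a mixture of an $e_j$ with $\lvert\lambda_j\rvert$ maximal and a kernel direction — this follows because adding any kernel component strictly decreases the modulus of the Rayleigh quotient once the vector is renormalized, unless the coefficient on $e_j$ has modulus one. I would make this precise by expanding $x=\sum_i c_i e_i + (\text{kernel part})$, computing $x^*M(\mathcal{A})x=\sum_i \lambda_i\lvert c_i\rvert^2$ with $\sum_i\lvert c_i\rvert^2\le 1$, and observing that $\lvert\sum_i\lambda_i\lvert c_i\rvert^2\rvert\le\max_i\lvert\lambda_i\rvert$ with equality iff all the weight concentrates on eigenvectors whose eigenvalues share the sign of the largest-modulus one and have that maximal modulus — which, by distinctness, is a single index $j$. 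That pins down $\hat X_1=E_j$ and $\hat\lambda_1=\lambda_j$, completing the proof.
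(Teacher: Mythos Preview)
Your proposal is correct and is essentially the same argument as the paper's: both expand the optimizer in the orthonormal system $\{E_i\}$ (equivalently, diagonalize the Hermitian matrix $M(\mathcal{A})$), bound $\bigl|\sum_i\lambda_i|c_i|^2\bigr|\le\max_i|\lambda_i|$, and use optimality to force all mass onto a single index, with the phase absorbed via $(cE_j)\circ\overline{(cE_j)}=E_j\circ\bar E_j$. The only notable difference is one of packaging---you route the computation through the Rayleigh quotient of $M(\mathcal{A})$ whereas the paper stays in $S^n$---and you are somewhat more explicit than the paper about the equality case when two eigenvalues could share the same modulus with opposite signs.
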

\begin{proof}
According to Theorem \ref{thm:main}, $E_i$ are mutually orthogonal, thus $\{E_1,\cdots,E_r\}$ is a subset of an orthonormal basis $\{E_1,\cdots,E_{\frac{n(n+1)}{2}}\}$ of $S^n$ and $0\neq\lambda_i\in R$.
Let $\hat{X}_1=\sum\limits_{i=1}x_iE_i$, where $x_i=\left<\hat{X}_1,E_i\right>$ for $i=1,2,\cdots,\frac{n(n+1)}{2}$. Since $\|\hat{X}_1\|_F$=1, we have $\sum\limits_{i=1}|x_i|^2=1$. Reorder the indices such that
\begin{equation}
\mid\lambda_1\mid\ge\mid\lambda_2\mid\ge\cdots\ge\mid\lambda_r\mid.
\end{equation}
Then we obtain
\begin{equation*}
\begin{aligned}
\mid\left<\mathcal{A},\hat{X}_1\circ\bar{\hat{X_1}}\right>\mid&=\mid\sum\limits_{i=1}^r\lambda_i\mid x_i\mid^2\mid\\
&\le \mid\lambda_1\mid.
\end{aligned}
\end{equation*}
On the other hand, the optimality leads to
\begin{equation*}
\begin{aligned}
\mid\left<\mathcal{A},\hat{X}_1\circ\bar{\hat{X_1}}\right>\mid&\ge\mid\left<\mathcal{A},E_1\circ\bar{E_1}\right>\mid\\
&= \mid\lambda_1\mid.
\end{aligned}
\end{equation*}
Hence,
\[\mid\lambda_1\mid\le\mid\left<\mathcal{A},\hat{X}_1\circ\bar{\hat{X_1}}\right>\mid=\mid\hat{\lambda}_1\mid\le\mid\lambda_1\mid.\]
So, $\mid\hat{\lambda}_1\mid=\mid\lambda_1\mid,~\mid x_1\mid=1$. Therefore, $\hat{X}_1=e^{i\theta}E_1,$ for any $\theta\in[0,2\pi]$,
 and
\[\hat{\lambda}_1=\left<\mathcal{A},\hat{X}_1\circ\bar{\hat{X_1}}\right>=\left<\mathcal{A},E_1\circ\bar{E}_1\right>=\lambda_1.\]
Then
let $x_1=1$, we have $\hat{X}_1=E_1$.
\end{proof}

Now, we prove Theorem \ref{thm:recover}.
\begin{proof}
By Lemma \ref{lem:onestep}, there exists $j\in{1,2,\cdots,r}$ such that $\hat{X}_1=E_j$, $\hat{\lambda}_1=\lambda_j$. Let
\[\mathcal{A}_1=\mathcal{A}-\hat{\lambda}_1\hat{X_1}\circ\bar{\hat{X_1}}=\sum\limits_{i\neq j}\lambda_iE_i\circ\bar{E}_i,\]
and
\[\hat{X}_2\in\underset{X\in S^n,\|X\|_F=1}{argmax}\mid\left<\mathcal{A}_1,X\circ\bar{X}\right>\mid,~\hat{\lambda}_2=\left<\mathcal{A},\hat{X}_2\circ\bar{\hat{X_2}}\right>.\]
By the similar proof of Lemma \ref{lem:onestep}, we know that there exists a $k\in\{1,2,\cdots,n\}\backslash\{j\}$ such that $\hat{\lambda}_2=\lambda_k,~\hat{X}_2=E_k$. Repeatedly, We can induce a permutation $\pi$ on $\{1,2,\cdots,r\}$ such that
\[\hat{\lambda}_j=\lambda_{\pi(j)},~\hat{X}_j=E_{\pi(j)},~j=1,2,\cdots,r.\]
\end{proof}
\begin{corollary}
Let \[\mathcal{A}=\sum\limits_{i=1}^r\lambda_iE_i\circ\bar{E_i},\]
where $\{E_1,\cdots,E_r\}$ is a subset of an orthonormal basis $\{E_1,\cdots,E_{\frac{n(n+1)}{2}}\}$ of $S^n$.  $0\neq\lambda_i\in R$ are different from each other, and are ordered as
\[\mid\lambda_1\mid\ge\mid\lambda_2\mid\ge\cdots\ge\mid\lambda_r\mid.\] Suppose $\{(\hat{\lambda}_i,\hat{X}_i)\}_{i=1}^n$ is the output of the SMROA algorithm for input $\mathcal{A}$. Then, $\hat{\lambda}_i=\lambda_i$. $\hat{X}_i=E_i$, for $i=1,2,\cdots,r$. In particular, if $\lambda_i>0$, we have $\hat{\lambda}_1>\hat{\lambda}_2>\cdots>\hat{\lambda}_r$; if $\lambda_i<0$, we have $\hat{\lambda}_1<\hat{\lambda}_2<\cdots<\hat{\lambda}_r$.
\end{corollary}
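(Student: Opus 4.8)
The statement is a refinement of Theorem~\ref{thm:recover} that, in addition to identifying the extracted pairs, also tracks the \emph{order} in which they are produced; so the plan is a straightforward induction on the iteration index, using Lemma~\ref{lem:onestep} as the engine. First I would record that $\mathcal{A}$ as given does satisfy the hypotheses of Lemma~\ref{lem:onestep}: each $E_i\circ\bar E_i$ is CPS (its square unfolding is the Hermitian rank-one matrix $e_ie_i^{*}$, and $E_i\in S^n$ supplies the partial-symmetry in the first two and the last two indices), hence $\mathcal{A}$ is CPS; moreover $M(\mathcal{A})=\sum_{i=1}^r\lambda_i e_ie_i^{*}$ with the $e_i$ orthonormal and the $\lambda_i$ nonzero and pairwise distinct, so $rank_M(\mathcal{A})=r$. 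The only extra input here is that the $\lambda_i$ arrive pre-sorted by magnitude.

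For the base step, apply Lemma~\ref{lem:onestep} to $\mathcal{A}$. The lemma's proof starts by reordering the coefficients so that $|\lambda_1|\ge|\lambda_2|\ge\cdots\ge|\lambda_r|$; since this already holds by hypothesis, the lemma returns the pair attached to the largest-magnitude coefficient, i.e.\ $\hat\lambda_1=\lambda_1$ and $\hat X_1=E_1$. (If several $|\lambda_i|$ tie with $|\lambda_1|$ the lemma selects one of them; one then relabels the tied block so the selected index is $1$, still consistent with the sorting. Under the sign hypotheses of the "in particular" clause this situation does not arise, since distinct reals of a single sign are automatically strictly ordered in magnitude.)

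For the inductive step, after the $j$-th iteration the residual is $\mathcal{A}_j=\mathcal{A}-\sum_{i=1}^j\lambda_iE_i\circ\bar E_i=\sum_{i=j+1}^r\lambda_iE_i\circ\bar E_i$, which again has exactly the form required by Lemma~\ref{lem:onestep}: it is CPS, the set $\{E_{j+1},\dots,E_r\}$ is still a subset of an orthonormal basis of $S^n$, the coefficients $\lambda_{j+1},\dots,\lambda_r$ are nonzero, distinct and still sorted by magnitude, and $rank_M(\mathcal{A}_j)=r-j$. Hence the lemma yields $\hat X_{j+1}=E_{j+1}$ and $\langle\mathcal{A}_j,E_{j+1}\circ\bar E_{j+1}\rangle=\lambda_{j+1}$; since $E_{j+1}$ is orthogonal to $E_1,\dots,E_j$ one also gets $\hat\lambda_{j+1}=\langle\mathcal{A},\hat X_{j+1}\circ\bar{\hat X}_{j+1}\rangle=\lambda_{j+1}$. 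Iterating from $j=0$ to $r-1$ gives $\hat\lambda_i=\lambda_i$, $\hat X_i=E_i$ for $i=1,\dots,r$, and $\mathcal{A}_r=0$ so any later iterates vanish. For the "in particular" part: if every $\lambda_i>0$, the magnitude sorting together with distinctness forces $\lambda_1>\lambda_2>\cdots>\lambda_r>0$, hence $\hat\lambda_1>\cdots>\hat\lambda_r$; if every $\lambda_i<0$, it forces $\lambda_1<\lambda_2<\cdots<\lambda_r<0$, hence $\hat\lambda_1<\cdots<\hat\lambda_r$. There is no deep obstacle in this argument; the only points demanding a little care are verifying that the residual $\mathcal{A}_j$ retains all the structural properties needed to re-invoke Lemma~\ref{lem:onestep} (CPS-ness, orthonormality of the surviving $E_i$, and inheritance of the magnitude ordering) and, for the general statement, handling possible ties among the $|\lambda_i|$.
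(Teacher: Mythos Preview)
Your proposal is correct and follows exactly the approach the paper intends: the paper simply states that the corollary ``directly follows from the proof of Lemma~\ref{lem:onestep},'' and your inductive argument spells out precisely this, tracking that the pre-sorted magnitudes force Lemma~\ref{lem:onestep} to extract $(\lambda_i,E_i)$ in order at each step. Your treatment is in fact more careful than the paper's one-line justification, particularly in noting the tie-breaking subtlety when distinct $\lambda_i$ of opposite signs can share a magnitude (a case the ``in particular'' sign hypotheses exclude).
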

This proposition directly follows from the proof of Lemma \ref{lem:onestep}.
\begin{remark}
According to the proof in Lemma \ref{lem:onestep}, if $\mathcal{A}$ is a partial-symmetric tensor with entries whose imaginary part is not zero, the SMROA algorithm may fail. This is because that if $\hat{X}_1=e^{i\theta}X_1$, $\hat{X}_1\circ\hat{X}_1\neq X_1\circ X_1$.
\end{remark}
\section{Applications of Matrix Outer Product Model}\label{sec:application}
\subsection{Low-CP-Rank Tensor Completion}
The following theorem shows that the CP rank of the CPS tensor $\mathcal{A}$ can be bounded by $rank_M(\mathcal{A})$.
\begin{theorem}\label{thm:bound}
For CPS tensor $\mathcal{A}\in C^{n^4}=\sum\limits_{i=1}^{rank_M(\mathcal{A})}\lambda_i E_i\circ E_i$, it holds that
\[rank_M(\mathcal{A})\le rank_{CP}(\mathcal{A})\le r^2rank_M(\mathcal{A}),\]
where $r=\underset{i}{max}\{rank(E_i)\}$.
\end{theorem}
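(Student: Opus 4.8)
The plan is to establish the two inequalities separately, both by exhibiting explicit decompositions.

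For the lower bound $\tmop{rank}_M(\mathcal{A})\le\tmop{rank}_{CP}(\mathcal{A})$, I would start from a minimal CP decomposition $\mathcal{A}=\sum_{t=1}^{s}\mu_t\,a_t\circ b_t\circ c_t\circ d_t$ with $s=\tmop{rank}_{CP}(\mathcal{A})$, where $a_t,b_t,c_t,d_t\in C^n$. Applying the square unfolding operator $M(\cdot)$ termwise, each quadruple outer product $a_t\circ b_t\circ c_t\circ d_t$ unfolds to the rank-one matrix $(b_t\otimes a_t)(d_t\otimes c_t)^\top$ (reading off the definition $M(\mathcal{A})_{(j-1)n+i,(l-1)n+k}=\mathcal{A}_{ijkl}$), so $M(\mathcal{A})$ is a sum of $s$ rank-one matrices and hence $\tmop{rank}(M(\mathcal{A}))\le s$. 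Combined with the identity $\tmop{rank}(M(\mathcal{A}))=\tmop{rank}_M(\mathcal{A})$ (used already in the proof of $\tmop{rank}_M=\tmop{rank}_{sm}$ earlier in the paper), this gives the claim. This half is routine.

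For the upper bound I would work from the matrix decomposition $\mathcal{A}=\sum_{i=1}^{m}\lambda_i E_i\circ\overline{E_i}$ with $m=\tmop{rank}_M(\mathcal{A})$ guaranteed by Theorem~\ref{thm:main}. For each $i$, diagonalize the symmetric matrix $E_i$ (spectral/Takagi-type factorization) so that $E_i=\sum_{j=1}^{r_i}\beta_i^{\,j}\,u_i^{\,j}(u_i^{\,j})^\top$ with $r_i=\tmop{rank}(E_i)\le r$; then $\overline{E_i}=\sum_{k=1}^{r_i}\overline{\beta_i^{\,k}}\,\overline{u_i^{\,k}}\,(\overline{u_i^{\,k}})^\top$. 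Substituting and expanding the outer product $E_i\circ\overline{E_i}$ as a double sum over $j$ and $k$, each term $u_i^{\,j}(u_i^{\,j})^\top\circ\overline{u_i^{\,k}}\,(\overline{u_i^{\,k}})^\top$ is a rank-one fourth-order tensor $u_i^{\,j}\circ u_i^{\,j}\circ\overline{u_i^{\,k}}\circ\overline{u_i^{\,k}}$. Thus $\mathcal{A}$ is written as a sum of at most $\sum_{i=1}^m r_i^2\le m\,r^2=r^2\tmop{rank}_M(\mathcal{A})$ rank-one tensors, which bounds $\tmop{rank}_{CP}(\mathcal{A})$. (This mirrors the counting in the proof of Corollary~\ref{thm:vecdecomp}.)

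I expect the main obstacle to be purely bookkeeping rather than conceptual: getting the index conventions of $M(\cdot)$ and of the Kronecker factors exactly right so that ``$a\circ b\circ c\circ d$ unfolds to a rank-one matrix'' is literally true with the paper's ordering, and making sure the diagonalization of $E_i$ is the symmetric (congruence-free, orthonormal-eigenvector) one so that the conjugate $\overline{E_i}$ factors through the conjugated eigenvectors cleanly. One should also note that the stated decomposition in the theorem writes $E_i\circ E_i$ rather than $E_i\circ\overline{E_i}$; I would silently use the CPS form $E_i\circ\overline{E_i}$ from Theorem~\ref{thm:main}, since that is what makes $\mathcal{A}$ a genuine tensor and the argument go through, treating the discrepancy as a typo. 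No subtlety about minimality of $r_i$ is needed — only the crude bound $r_i\le r$.
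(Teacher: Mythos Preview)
The paper states Theorem~\ref{thm:bound} without proof, so there is nothing to compare against directly. Your proposal is correct and is the natural argument one would supply: the lower bound reuses the identity $\tmop{rank}(M(\mathcal{A}))=\tmop{rank}_M(\mathcal{A})$ already invoked in the paper's proof that $\tmop{rank}_M=\tmop{rank}_{sm}$, and the upper bound expands each $E_i\circ\overline{E_i}$ via a symmetric rank-one (Takagi) decomposition of $E_i$, exactly mirroring the counting in the proof of Corollary~\ref{thm:vecdecomp}. Your observation that the statement's $E_i\circ E_i$ should be read as $E_i\circ\overline{E_i}$ in accordance with Theorem~\ref{thm:main} is correct and necessary for the argument to go through in the complex CPS case.
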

Scenarios like colored video data with static background is more likely to be a low-CP-rank tensor\cite{Jiang2018Low}. So the completion problem for this kind of data can be formulated as
\begin{equation}\label{equ:cp}
\begin{aligned}
min ~&rank_{CP}(\mathcal{X})\\
s.t.~&P_{\Omega}(\mathcal{X})=P_{\Omega}(\mathcal{A}).
\end{aligned}
\end{equation}
\eqref{equ:cp} is intractable to deal with directly, since the CP rank of a tensor is generally hard to estimate. Here we follow the idea of Jiang et al.\cite{Jiang2018Low} to cope with the completion problem of partial-symmetric tensor.
According to Theorem \ref{thm:bound}, we may approximate it by
\begin{equation}\label{equ:replace}
\begin{aligned}
min ~&rank_{M}(\mathcal{X})\\
s.t.~&P_{\Omega}(\mathcal{X})=P_{\Omega}(\mathcal{A}),
\end{aligned}
\end{equation}
The following example gives an intuitive explanation for the rationality of the approximation.
\begin{example}
Suppose $\mathcal{A}\in R^{10^4}$, $\mathcal{A}=\sum\limits_{i=1}^4\lambda_i(x_i\circ x_i\circ y_i\circ y_i+y_i\circ y_i\circ x_i\circ x_i)$ is partial-symmetric. Then $rank_M(\mathcal{A})\le rank_{CP}(\mathcal{A})\le 8$, while $M(\mathcal{A})\in R^{100\times 100}$ is a low-rank matrix.
\end{example}
\eqref{equ:replace} is relaxed as a low rank approximation problem with nuclear norm regular term,
\begin{equation}\label{equ:completion}
\begin{aligned}
min~ & \mu\|X\|_*+\frac{1}{2}\|P_{\Omega}(\mathcal{X})-P_{\Omega}(\mathcal{A})\|_F^2\\
s.t.~& X=M(\mathcal{X}).\\
\end{aligned}
\end{equation}
$\mathcal{X}$ in above problems is required to be partial-symmetric. And the sample set $\Omega$ is partial-symmetric.

We can apply the Fixed Point Continuation(FPC) algorithm\cite{Ma2011Fixed} to solve \eqref{equ:completion}.
\begin{algorithm}
    \caption{FPC for \eqref{equ:completion} }\label{alg:FPC}
     Given a PS tensor $\mathcal{A}\in R^{n^4}$. Initialize $\mathcal{X}_0=0$. Given parameter $\tau$, $\mu_1$, $\mu_L$
    \begin{algorithmic}
    \For {$\mu=\mu_1,\cdots,\mu_L$}
        \For {$k=1,2,\cdots$}
        \State $\mathcal{Y}_k=\mathcal{X}_{k-1}-\tau P_\Omega(\mathcal{X}_{k-1}-\mathcal{A}),$
        \State $M(\mathcal{X})_k=D_{\tau\mu}(M(\mathcal{Y})_{k}).$
        \EndFor
    \EndFor
    \State Return $\mathcal{X}_k$.
    \end{algorithmic}
\end{algorithm}

$\mu_{k+1}=\eta*\mu_k$, where $0<\eta<1$. The convergence of this algorithm is guaranteed \cite{Ma2011Fixed}. Since the iteration in Algorithm \ref{alg:FPC} does not change the symmetric property  of $X_k$ when $\Omega$ and $\mathcal{A}$ are partial-symmetric, the solution $X_*$ is still partial-symmetric.
\subsection{Low Rank Matrix Outer Product Approximation}
Parallel to the sparse rank-one approximation problem, we can also discuss the low rank matrix outer product "rank-one" approximation as follows based on the matrix decomposition model proposed in last section.
\begin{equation}\label{equ:sparse}
\begin{array}{lrc}
min~\|\mathcal{A}-\alpha X\circ X\|^2_F+\lambda\|X\|_*\\
s.t.\quad\alpha\in R,\\
\qquad X\in S^n,~\|X\|_F=1,
\end{array}
\end{equation}
where $\mathcal{A}\in R^{n^4}$ is partial-symmetric.

We modify the proximal linearized minimization algorithm (PLMA) proposed by Bolte et al.\cite{Bolte2014Proximal} to solve problem \eqref{equ:sparse}. The iterative scheme is
\begin{equation}\label{equ:iter}
\begin{array}{lrc}
\hat{X}_{k+1}\in\underset{X}{argmin}\{f(\alpha_k,X_k)+\left<X-X_k,\nabla_X f(\alpha_k,X_k)\right>+\frac{t_k}{2}\|X-X_k\|_F^2+\lambda\|X\|_*\},\\
X_{k+1}=\frac{\hat{X}_{k+1}}{\|\hat{X}_{k+1}\|_F},\\
\alpha_{k+1} = \left<\mathcal{A},X_{k+1}\circ X_{k+1}\right>.
\end{array}
\end{equation}
where $t_k>0$, $f(\alpha,X)=\|\mathcal{A}-\alpha X\circ X\|^2_F$ and
\begin{equation}
\nabla_X f(\alpha,X)=4\alpha^2\|X\|_F^2X-4\alpha\mathcal{A}X.
\end{equation}

To solve \eqref{equ:iter}, there exists simple singular value thresholding operator for the nuclear norm, see \cite{Cai2008A}.
\begin{lemma}\cite[Theorem 2.1]{Cai2008A}
Let $X\in R^{n_1\times n_2}$ be an arbitrary matrix and $U\Sigma V^T$ be its SVD. It is known that
$$\partial\|X\|_*=\{UV^T+W:~W\in R^{n_1\times n_2},~U^TW=0,~WV=0,~\|W\|_2\le1\}.$$
\begin{equation}
\begin{aligned}
D_{\tau}(X)&=\underset{Y}{argmin}~\frac{1}{2}\|Y-X\|_F^2+\tau\|Y\|_*\\
&=UD_{\tau}(\Sigma)V^T\\
&=U_0(\Sigma_0-\tau I)V_0^T,
\end{aligned}
\end{equation}
where $D_{\tau}(\Sigma)=diag((\sigma_i-\tau)_+)$, $U_0,~V_0$ are the singular vectors associated with singular values greater than $\tau$.
\end{lemma}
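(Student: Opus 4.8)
The quoted statement is the classical singular value thresholding lemma, so the plan is to reconstruct its standard proof in two stages: first the subdifferential formula for the nuclear norm, then the closed form of the proximal operator $D_\tau$. For the subdifferential, the key fact is that $\|\cdot\|_*$ is a norm whose dual norm is the spectral norm $\|\cdot\|_2$, so for $X\neq 0$ one has $G\in\partial\|X\|_*$ if and only if $\|G\|_2\le 1$ and $\langle G,X\rangle=\|X\|_*$. Writing the compact SVD $X=U\Sigma V^T$ with $\Sigma$ invertible, I would check that any $G=UV^T+W$ with $U^TW=0$, $WV=0$, $\|W\|_2\le 1$ satisfies $\langle G,X\rangle=\mathrm{tr}\,\Sigma=\|X\|_*$ and, since $UV^T$ and $W$ act on mutually orthogonal column and row spaces, $\|G\|_2=\max(1,\|W\|_2)=1$; the reverse inclusion follows by projecting an arbitrary subgradient onto the column and row spaces of $X$ and invoking von Neumann's trace inequality. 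This produces the displayed set, with $U,V$ understood as the compact SVD factors.

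For the closed form of $D_\tau$, I would first note that $Y\mapsto\frac12\|Y-X\|_F^2+\tau\|Y\|_*$ is strictly convex and coercive, hence has a unique minimizer $\hat{Y}$, characterized by the optimality condition $0\in\hat{Y}-X+\tau\,\partial\|\hat{Y}\|_*$, i.e.\ $X-\hat{Y}\in\tau\,\partial\|\hat{Y}\|_*$. It then suffices to verify that $\hat{Y}:=U_0(\Sigma_0-\tau I)V_0^T$ meets this condition. Splitting the SVD of $X$ as $X=U_0\Sigma_0V_0^T+U_1\Sigma_1V_1^T$, where $\Sigma_0$ collects the singular values exceeding $\tau$ and $\Sigma_1$ the remaining ones, one gets $X-\hat{Y}=\tau U_0V_0^T+U_1\Sigma_1V_1^T=\tau(U_0V_0^T+W)$ with $W:=\tau^{-1}U_1\Sigma_1V_1^T$. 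Because $U_0^TW=0$, $WV_0=0$ and $\|W\|_2=\tau^{-1}\|\Sigma_1\|_2\le 1$, the first stage (applied at $\hat{Y}$, whose compact SVD is exactly $U_0(\Sigma_0-\tau I)V_0^T$ since $\Sigma_0-\tau I$ is positive definite) identifies $U_0V_0^T+W$ as an element of $\partial\|\hat{Y}\|_*$; the degenerate case where $\Sigma_0$ is empty is covered by $\partial\|0\|_*=\{G:\|G\|_2\le 1\}$. Hence $\hat{Y}=D_\tau(X)=UD_\tau(\Sigma)V^T$, as claimed.

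The main obstacle is the subdifferential characterization of the nuclear norm: making the duality and trace-inequality argument precise, and, when $X$ has repeated singular values, pinning down the SVD factors so that the orthogonality conditions $U^TW=0$ and $WV=0$ are unambiguous. Once that formula is in hand, the identification of $D_\tau$ is the short computation sketched above.
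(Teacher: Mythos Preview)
The paper does not supply its own proof of this lemma: it is quoted verbatim as \cite[Theorem~2.1]{Cai2008A} and used as a black box for the iteration \eqref{equ:iter_x}, so there is no in-paper argument to compare against. Your proposal is the standard proof from the cited source: characterize $\partial\|\cdot\|_*$ via the duality $\|\cdot\|_*^{\,*}=\|\cdot\|_2$ (the Watson formula), then verify that the soft-thresholded SVD satisfies the first-order optimality condition $X-\hat Y\in\tau\,\partial\|\hat Y\|_*$. Both steps are correctly outlined, including the edge cases ($\Sigma_0$ empty, repeated singular values), so your plan would reproduce exactly the argument the paper is citing.
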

Thus we can compute the analytical solution of $\hat{X}_{k+1}$ in \eqref{equ:iter}, that is,
\begin{equation}\label{equ:iter_x}
\hat{X}_{k+1}=D_{\frac{\lambda}{t}}(X_k-\frac{1}{t}\nabla_X f(\alpha_k,X_k)).
\end{equation}

\begin{lemma}\label{lem:bounded}
The $\alpha$-sublevel set of the objective function of \eqref{equ:sparse}, $\{(\alpha,X)\in R\times S^n\mid f(\alpha,X)+\lambda\|X\|_*\le \alpha,~\|X\|_F=1\}$, is bounded.
\end{lemma}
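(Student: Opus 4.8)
The plan is to use the normalization $\|X\|_F = 1$ to dispose of the matrix component of the set immediately, and then to show that the defining inequality forces $\alpha$ into a bounded interval. Every pair $(\alpha,X)$ in the set has $\|X\|_F = 1$, so the $X$-component lies on the unit sphere of $S^n$ and is bounded a priori; it therefore remains only to control $\alpha$.

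First I would rewrite $f$. Since $(X\circ X)_{ijkl} = X_{ij}X_{kl}$, we have $\|X\circ X\|_F^2 = \|X\|_F^4 = 1$, hence
\[
f(\alpha,X) = \|\mathcal{A} - \alpha X\circ X\|_F^2 = \|\mathcal{A}\|_F^2 - 2\alpha\langle\mathcal{A},X\circ X\rangle + \alpha^2 .
\]
Because $\lambda \ge 0$ and $\|X\|_* \ge 0$, the defining inequality $f(\alpha,X) + \lambda\|X\|_* \le \alpha$ implies $f(\alpha,X) \le \alpha$, i.e.
\[
\alpha^2 - \bigl(1 + 2\langle\mathcal{A},X\circ X\rangle\bigr)\alpha + \|\mathcal{A}\|_F^2 \le 0 .
\]

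Next I would discard the nonnegative term $\|\mathcal{A}\|_F^2$, which yields $\alpha\bigl(\alpha - (1 + 2\langle\mathcal{A},X\circ X\rangle)\bigr) \le 0$; thus $\alpha$ lies in the closed interval with endpoints $0$ and $1 + 2\langle\mathcal{A},X\circ X\rangle$, so $|\alpha| \le |1 + 2\langle\mathcal{A},X\circ X\rangle|$. By Cauchy--Schwarz and $\|X\circ X\|_F = 1$ we get $|\langle\mathcal{A},X\circ X\rangle| \le \|\mathcal{A}\|_F$, and therefore $|\alpha| \le 1 + 2\|\mathcal{A}\|_F$. Together with $\|X\|_F = 1$, this shows the sublevel set is contained in $\bigl[-(1 + 2\|\mathcal{A}\|_F),\, 1 + 2\|\mathcal{A}\|_F\bigr] \times \{X \in S^n : \|X\|_F = 1\}$, hence bounded.

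I do not expect any genuine obstacle; once the objective is expanded this is essentially a one-line estimate. The two points that need a little care are recording the identity $\|X\circ X\|_F = \|X\|_F^2$, so that $f$ becomes a quadratic in $\alpha$ with leading coefficient $1$, and observing that the nuclear-norm penalty, being nonnegative, may simply be dropped for the purpose of bounding $\alpha$ (it contributes nothing to boundedness of the sublevel set, though of course it matters for the optimization problem itself). One should also flag the standing assumption $\lambda \ge 0$ implicit in treating $\lambda$ as a regularization parameter.
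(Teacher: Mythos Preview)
Your proof is correct. The paper's own justification is a one-line coercivity argument: by the reverse triangle inequality,
\[
f(\alpha,X)+\lambda\|X\|_*\ \ge\ \bigl|\,\|\mathcal{A}\|_F - |\alpha|\,\|X\circ X\|_F\,\bigr|^2 \ \longrightarrow\ +\infty \quad\text{as } |\alpha|\to\infty,
\]
so no unbounded sequence of $\alpha$'s can satisfy the defining inequality. You take a different route: you expand $f$ as a monic quadratic in $\alpha$, drop the nonnegative terms $\|\mathcal{A}\|_F^2$ and $\lambda\|X\|_*$, and read off an explicit interval for $\alpha$ from the resulting factored inequality. Your approach buys a concrete bound $|\alpha|\le 1+2\|\mathcal{A}\|_F$, which the paper's soft argument does not supply; the paper's argument, in turn, makes it transparent that boundedness holds for \emph{every} sublevel set $\{f+\lambda\|X\|_*\le c\}$, not just the one with level equal to the variable $\alpha$ itself (the reuse of the symbol $\alpha$ for both the level and the variable in the statement is almost certainly a notational slip).
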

It is obvious since $f(\alpha,X)+\lambda\|X\|_*\ge\mid\mathcal{A}-\mid\alpha\mid\|X\circ X\|_F\mid^2\rightarrow +\infty$ when $\mid\alpha\mid\rightarrow +\infty$.

For the iterative scheme, we have the following sufficient descent property.
\begin{lemma}\label{lem:descent}
Let $f$ be continuously diferentiable over $X$ and its gradient $\nabla_X f$ be $L_f$-Lipschitz continuous locally. Then for any $t_k>L_f$, it holds that
\[f(\alpha_{k+1},X_{k+1})+\lambda\|X_{k+1}\|_*\le f(\alpha_k,X_k)+\lambda\|X_k\|_*-\frac{t_k-L_f}{2}\|X_k-X_{k+1}\|^2.\]
\end{lemma}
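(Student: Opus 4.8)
The plan is to follow the standard descent-lemma argument for proximal-gradient-type iterations, adapted to the normalization step. First I would exploit the defining property of the proximal step: since $\hat X_{k+1}$ is the minimizer in \eqref{equ:iter}, plugging in the competitor $X_k$ yields
\[
f(\alpha_k,X_k)+\langle \hat X_{k+1}-X_k,\nabla_X f(\alpha_k,X_k)\rangle+\frac{t_k}{2}\|\hat X_{k+1}-X_k\|_F^2+\lambda\|\hat X_{k+1}\|_*\le f(\alpha_k,X_k)+\lambda\|X_k\|_*.
\]
Next I would invoke the descent inequality coming from $L_f$-Lipschitz continuity of $\nabla_X f$ on the relevant bounded region (which is legitimate because Lemma~\ref{lem:bounded} keeps the iterates in a bounded set, so the local Lipschitz constant applies uniformly along the sequence):
\[
f(\alpha_k,\hat X_{k+1})\le f(\alpha_k,X_k)+\langle \hat X_{k+1}-X_k,\nabla_X f(\alpha_k,X_k)\rangle+\frac{L_f}{2}\|\hat X_{k+1}-X_k\|_F^2.
\]
Combining these two gives
\[
f(\alpha_k,\hat X_{k+1})+\lambda\|\hat X_{k+1}\|_*\le f(\alpha_k,X_k)+\lambda\|X_k\|_*-\frac{t_k-L_f}{2}\|\hat X_{k+1}-X_k\|_F^2.
\]

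The remaining work is to pass from $\hat X_{k+1}$ to the normalized iterate $X_{k+1}=\hat X_{k+1}/\|\hat X_{k+1}\|_F$ together with the updated $\alpha_{k+1}=\langle\mathcal{A},X_{k+1}\circ X_{k+1}\rangle$. Here I would use that $\alpha_{k+1}$ is by construction the exact minimizer of $\alpha\mapsto f(\alpha,X_{k+1})=\|\mathcal{A}-\alpha X_{k+1}\circ X_{k+1}\|_F^2$ over $\alpha\in R$ when $\|X_{k+1}\|_F=1$ (the one-variable least-squares solution), so in particular
\[
f(\alpha_{k+1},X_{k+1})+\lambda\|X_{k+1}\|_*\le f(\alpha,X_{k+1})+\lambda\|X_{k+1}\|_*\quad\text{for every }\alpha\in R.
\]
Choosing $\alpha=\alpha_k\|\hat X_{k+1}\|_F^2$ makes $\alpha X_{k+1}\circ X_{k+1}=\alpha_k\hat X_{k+1}\circ\hat X_{k+1}$, hence $f(\alpha_k\|\hat X_{k+1}\|_F^2,X_{k+1})=f(\alpha_k,\hat X_{k+1})$; and since the nuclear norm scales as $\|X_{k+1}\|_*=\|\hat X_{k+1}\|_*/\|\hat X_{k+1}\|_F$ while $\|X_k\|_F=1$, one checks $\lambda\|X_{k+1}\|_*\le\lambda\|\hat X_{k+1}\|_*$ is not automatic, so instead I would note $f$ and the penalty together are invariant under the joint rescaling $(\alpha,X)\mapsto(\alpha/s^2,sX)$ only for $f$, not for $\|X\|_*$; the clean route is therefore to observe that the value attained at $(\alpha_{k+1},X_{k+1})$ is no larger than that at $(\alpha_k,\hat X_{k+1})$ because the latter is a feasible point for the unconstrained-in-$\alpha$, normalized-in-$X$ subproblem after rescaling, and the rescaling $X\mapsto X/\|X\|_F$ cannot increase the objective when combined with the optimal $\alpha$. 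Chaining this with the displayed inequality and writing $\|\hat X_{k+1}-X_k\|_F=\|X_k-X_{k+1}\|$ after absorbing the normalization gives the claim.

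The main obstacle I anticipate is precisely this last bookkeeping step: reconciling the proximal inequality, which is stated for the \emph{unnormalized} $\hat X_{k+1}$, with the conclusion, which is stated for the \emph{normalized} $X_{k+1}$, while the nuclear-norm term does not scale the same way as the quadratic term. I expect the paper either implicitly assumes $\|\hat X_{k+1}\|_F$ is close to $1$ (so the rescaling is harmless to leading order), or defines the quantities so that the normalization is absorbed into $\|X_k-X_{k+1}\|$ on the right-hand side; the rest — the proximal optimality inequality and the Lipschitz descent inequality — is routine and poses no difficulty given Lemma~\ref{lem:bounded} to supply a uniform $L_f$.
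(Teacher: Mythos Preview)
Your approach is exactly the paper's: the Lipschitz descent inequality for $f(\alpha_k,\cdot)$, the proximal optimality inequality at $\hat X_{k+1}$ compared against $X_k$, their sum, and then the passage to $(\alpha_{k+1},X_{k+1})$ via optimality of $\alpha_{k+1}$. The normalization bookkeeping you flag is not handled more carefully in the paper either: it simply asserts $f(\alpha_{k+1},X_{k+1})+\lambda\|X_{k+1}\|_*\le f(\alpha_{k},\hat X_{k+1})+\lambda\|\hat X_{k+1}\|_*$ from the fact that $\alpha_{k+1}$ minimizes $\alpha\mapsto f(\alpha,X_{k+1})+\lambda\|X_{k+1}\|_*$, and then states the conclusion with $\|X_k-X_{k+1}\|^2$ on the right without further comment, so your suspicion that this last step is loosely argued is well founded.
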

\begin{proof}
Since $\nabla f$ is $L_f$-Lipschitz continuous locally, we have
\begin{equation}\label{equ:taylor}
f(\alpha_k,\hat{X}_{k+1})\le f(\alpha_k,X_k)+\left<\hat{X}_{k+1}-X_k,\nabla_X f(\alpha_k,X_k)\right>+\frac{L_f}{2}\|\hat{X}_{k+1}-X_k\|_F^2.
\end{equation}
According to \eqref{equ:iter}, we also obtain that
\begin{equation}\label{equ:equ1}
\left<\hat{X}_{k+1}-X_k,\nabla_X f(\alpha_k,X_k)\right>+\frac{t_k}{2}\|\hat{X}_{k+1}-X_k\|_F^2+\lambda\|\hat{X}_{k+1}\|_*\le \lambda\|X_k\|_*.
\end{equation}
Add \eqref{equ:taylor} and \eqref{equ:equ1}, we have
\[f(\alpha_{k},\hat{X}_{k+1})+\lambda\|\hat{X}_{k+1}\|_*\le f(\alpha_k,X_k)+\lambda\|X_k\|_*-\frac{t_k-L_f}{2}\|X_k-\hat{X}_{k+1}\|^2.\]
Since $\alpha_{k+1}=\left<\mathcal{A},{X}_{k+1}\circ{X}_{k+1}\right>$ minimizes
\[f(\alpha,X_{k+1})+\lambda\|X_{k+1}\|_*,\]
we obtain that
\[f(\alpha_{k+1},X_{k+1})+\lambda\|X_{k+1}\|_*\le f(\alpha_{k},\hat{X}_{k+1})+\lambda\|\hat{X}_{k+1}\|_*.\]
The desired inequlity then follows.
\end{proof}
\begin{theorem}
For the sequence $\{X_k\}$ generated by \eqref{equ:iter_x}, its any cluster $X_*$ is a stationary point of \eqref{equ:sparse}.
\end{theorem}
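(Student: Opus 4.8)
The strategy is the standard three–step convergence argument for proximal linearized schemes: (i) establish a monotone, summable decrease of the objective along the iterates; (ii) use boundedness of the iterates to extract a convergent subsequence; (iii) pass to the limit in the first–order optimality condition of the proximal subproblem \eqref{equ:iter}. Throughout write $F(\alpha,X):=f(\alpha,X)+\lambda\|X\|_*$, which is bounded below by $0$ since $f\ge 0$ and $\lambda\|X\|_*\ge 0$.

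First I would record the descent. Choosing the step sizes so that $c:=\inf_k (t_k-L_f)>0$ (e.g.\ a fixed $t_k$ larger than the Lipschitz modulus of $\nabla_X f$ on the bounded region containing the iterates, or a backtracking rule), the two intermediate inequalities in the proof of Lemma \ref{lem:descent} give
\[
F(\alpha_{k+1},X_{k+1})+\frac{c}{2}\|X_k-\hat{X}_{k+1}\|_F^2\le F(\alpha_k,X_k),
\]
where $\hat{X}_{k+1}$ is the unnormalized iterate of \eqref{equ:iter_x}. Hence $\{F(\alpha_k,X_k)\}$ is non-increasing and bounded below, therefore convergent, and telescoping yields $\sum_k\|X_k-\hat{X}_{k+1}\|_F^2<\infty$; in particular $\|X_k-\hat{X}_{k+1}\|_F\to 0$. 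Since $\|X_k\|_F=1$ for $k\ge 1$, this forces $\|\hat{X}_{k+1}\|_F\to 1$, hence $\|X_{k+1}-\hat{X}_{k+1}\|_F\to 0$ and so $\|X_{k+1}-X_k\|_F\to 0$ as well.

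Next, the iterates are bounded: $\|X_k\|_F=1$, and $|\alpha_k|=|\langle\mathcal A,X_k\circ X_k\rangle|\le\|\mathcal A\|_F$ for $k\ge 1$ (alternatively, all $(\alpha_k,X_k)$ lie in a sublevel set of $F$, which is bounded by Lemma \ref{lem:bounded}). Given a cluster point $X_*$ with $X_{k_j}\to X_*$, the previous paragraph gives $X_{k_j+1}\to X_*$ and $\hat{X}_{k_j+1}\to X_*$; by continuity $\alpha_{k_j}=\langle\mathcal A,X_{k_j}\circ X_{k_j}\rangle\to\langle\mathcal A,X_*\circ X_*\rangle=:\alpha_*$ and likewise $\alpha_{k_j+1}\to\alpha_*$. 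Because $\alpha_*$ is exactly the minimizer of $\alpha\mapsto f(\alpha,X_*)$, we have $\nabla_\alpha F(\alpha_*,X_*)=0$. Finally I would pass to the limit in the optimality condition of \eqref{equ:iter}, namely $t_k(X_k-\hat{X}_{k+1})-\nabla_X f(\alpha_k,X_k)\in\lambda\,\partial\|\hat{X}_{k+1}\|_*$; since the singular value thresholding of a symmetric matrix is symmetric and $\nabla_X f(\alpha,X)=4\alpha^2\|X\|_F^2 X-4\alpha\mathcal A X$ keeps $S^n$ invariant for partial-symmetric $\mathcal A$, this inclusion lives in $S^n$. Along $k_j$ the left side tends to $-\nabla_X f(\alpha_*,X_*)$ (using boundedness of $\{t_k\}$, $\|X_{k_j}-\hat{X}_{k_j+1}\|_F\to 0$, and continuity of $\nabla_X f$), while the right side lies in the graph of the outer-semicontinuous map $\lambda\,\partial\|\cdot\|_*$ evaluated along $\hat{X}_{k_j+1}\to X_*$; the closed-graph property then yields $-\nabla_X f(\alpha_*,X_*)\in\lambda\,\partial\|X_*\|_*$. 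Combined with $\|X_*\|_F=1$ and $\nabla_\alpha F(\alpha_*,X_*)=0$, this is the first-order stationarity condition for \eqref{equ:sparse} (the normal-cone contribution of the constraint $\|X\|_F=1$ being absorbed with multiplier $0$).

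The main obstacle is precisely this limiting step. It hinges on the summability $\sum_k\|X_k-\hat{X}_{k+1}\|_F^2<\infty$ (hence $X_k-\hat{X}_{k+1}\to 0$), on keeping $t_k$ bounded away from the local Lipschitz constant $L_f$ of $\nabla_X f$ on the region containing the iterates and also bounded above, and on the outer semicontinuity of $\partial\|\cdot\|_*$; one must also carry the renormalization through carefully so that $\hat{X}_{k+1}$ and $X_{k+1}$ share the same limit and the multiplier of the sphere constraint does not obstruct the conclusion.
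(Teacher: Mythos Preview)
Your argument is correct and is precisely the standard descent--boundedness--subsequential limit scheme for proximal linearized methods; the paper itself does not give a proof of this theorem at all, writing only that it is ``similar to that in \cite{0A,Wang2017Low}'' and omitting it. Your write-up therefore supplies exactly the details the paper defers to those references, including the careful handling of the renormalization $X_{k+1}=\hat X_{k+1}/\|\hat X_{k+1}\|_F$ (via $\|X_k-\hat X_{k+1}\|_F\to 0\Rightarrow\|\hat X_{k+1}\|_F\to 1$) and the closed-graph passage for $\partial\|\cdot\|_*$.
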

The proof is similar to that in \cite{0A,Wang2017Low}, we just omit it.

\section{The equivalence property of CPS tensors} \label{sec:rank1Equ}
In this section, we prove that a fourth-order CPS tensor, denoted as $\mathcal{T}
\in \mathbb{C}^{n^4}_{ps}$, is rank-one if and only if a specific matrix unfolding of $\mathcal{T}$ is rank-one. We first prove the following lemma.
\begin{lemma}
	\label{lemma:CPSdecomp}
	If $\mathcal{T} \in \mathbb{C}_{ps}^{n^4}$ if rank-one, then there exist $\lambda \in \mathbb{R}$ and $x \in
	\mathbb{C}^n$ such that $\mathcal{T}= \lambda x \otimes x \otimes \bar{x}
	\otimes \bar{x}$.
	
	\begin{proof}
		Since $\mathcal{T}$ is rank-one, we have
		\[ \mathcal{T}= x \circ y \circ w \circ z, \quad \mathrm{} x, y, w,
		z \in \mathbb{C}^n . \]
		According to the conjugate symmetry of $\mathcal{T}$, we obtain that
		$\mathcal{T}_{[1, 2 ; 3, 4]} = \mathrm{vec} (x \circ y) \circ
		\mathrm{vec} (w \circ z)$ is a Hermitian matrix, thus, there exists $\lambda \in
		\mathbb{R}$ such that $w \circ z = \lambda \bar{x} \circ \bar{y}$.
		This further implies that there are $\alpha, \beta \in \mathbb{C}$ such that $w =
		\alpha \bar{x}$, $z = \beta \bar{y}$ and $\alpha \beta =
		\lambda$. Therefore, we have
		\[ \mathcal{T}= x \circ y \circ w \circ z = \lambda x \circ y
		\circ \bar{x} \circ \bar{y} . \]
		On the other hand, $\mathcal{T}$ is symmetric about the first and the second indexes, so $x = y$ and
		\[ \mathcal{T}= \lambda x \circ x \circ \bar{x} \circ \bar{x} . \]
	\end{proof}
\end{lemma}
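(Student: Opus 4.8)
The plan is to start from the bare definition of a rank-one tensor and then extract every constraint that conjugate partial symmetry imposes. Write $\mathcal{T} = a \circ b \circ c \circ d$ with $a, b, c, d \in \mathbb{C}^n$ nonzero, so that $\mathcal{T}_{ijkl} = a_i b_j c_k d_l$ and the square unfolding factors as $M(\mathcal{T}) = \mathrm{vec}(a \circ b)\,\mathrm{vec}(c \circ d)^{\top}$, a rank-one matrix. The relation $\mathcal{T}_{ijkl} = \overline{\mathcal{T}_{klij}}$ says precisely that $M(\mathcal{T})$ is Hermitian. The first key step is the elementary fact that a nonzero rank-one matrix $u v^{\top}$ can be Hermitian only when $v$ is a \emph{real} scalar multiple of $\bar u$. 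Applying this to $M(\mathcal{T})$ gives $c \circ d = \lambda\,(\bar a \circ \bar b)$ for some $\lambda \in \mathbb{R} \setminus \{0\}$, and equating these two rank-one matrices forces $c = \alpha \bar a$ and $d = \beta \bar b$ with $\alpha \beta = \lambda$. Substituting back, $\mathcal{T} = \alpha\beta\, a \circ b \circ \bar a \circ \bar b = \lambda\, a \circ b \circ \bar a \circ \bar b$.

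Next I would use the permutation symmetry inside the first pair of indices. From $\mathcal{T}_{ijkl} = \mathcal{T}_{jikl}$, together with the fact that $\bar a_k \bar b_l \neq 0$ for at least one pair $(k,l)$, one gets $a_i b_j = a_j b_i$ for all $i, j$; hence $a b^{\top}$ is symmetric, and choosing an index at which $a$ does not vanish yields $b = \gamma a$ for some $\gamma \in \mathbb{C}$. Writing $x \assign a$ and substituting, $\mathcal{T} = \lambda |\gamma|^{2}\, x \circ x \circ \bar x \circ \bar x$; renaming $\lambda |\gamma|^{2}$ as $\lambda$ — which stays real because $|\gamma|^{2} \ge 0$ — gives the claimed form $\mathcal{T} = \lambda\, x \otimes x \otimes \bar x \otimes \bar x$. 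As a consistency check, specializing conjugate symmetry to $i = j = k = l$ forces $\mathcal{T}_{iiii} = \lambda |x_i|^{4}$ to be real, which re-confirms $\lambda \in \mathbb{R}$; note also that the permutation symmetry inside the last pair of indices is automatic once $b \parallel a$ and so need not be invoked separately.

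I expect the only real difficulty to be bookkeeping rather than conceptual: one must justify the step that a nonzero rank-one Hermitian matrix $u v^{\top}$ has $v$ a real multiple of $\bar u$, divide only by components of $a, b, c, d$ that are known to be nonzero, and make sure the several proportionality constants (from $c \parallel \bar a$, $d \parallel \bar b$, $b \parallel a$) are consolidated into a single coefficient that is genuinely real. A shorter alternative relying on Theorem~\ref{thm:main} is also available: rank-one-ness makes $M(\mathcal{T})$ a rank-one Hermitian matrix $\lambda e e^{*}$ with $\lambda \in \mathbb{R}$ whose fold $E$ is symmetric by that theorem, and rank-one-ness of the tensor then forces $E = x x^{\top}$ to be rank-one, so $\mathcal{T} = \lambda\,(x x^{\top}) \circ \overline{(x x^{\top})}$, which is exactly the asserted decomposition. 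I would nonetheless present the self-contained entrywise argument as the main proof.
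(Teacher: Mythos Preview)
Your proposal is correct and follows essentially the same route as the paper: write $\mathcal{T}$ as a rank-one outer product, use that the $[1,2;3,4]$ unfolding is a rank-one Hermitian matrix to force the last two factors to be real-scalar multiples of the conjugates of the first two, and then use symmetry in the first pair of indices to collapse $b$ onto $a$. Your bookkeeping is in fact a bit tighter than the paper's (you write $b=\gamma a$ and track that the absorbed constant $\lambda|\gamma|^{2}$ remains real, whereas the paper jumps directly to ``$x=y$''), but the underlying argument is identical.
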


Then we prove that $\mathcal{T} \in \mathbb{C}^{n^4}_{ps}$
if rank-one if and only if $\mathcal{T}_{[3, 2 ; 1, 4]}$ if a rank-one matrix.

\begin{lemma}
	\label{lemma:rankEqv} Let $\mathcal{T} \in
	\mathbb{C}_{ps}^{n^4}$, if the unfolding matrix $\mathcal{T}_{[3, 2 ; 1,4]}$ is rank-one, then $\mathcal{T}$ is a rank-one tensor.
	
	\begin{proof}
		According to Lemma \ref{lemma:CPSdecomp}, $\mathcal{T}$ can be represented as
		\[ \mathcal{T}= \sum_{i = 1}^r \lambda_i a_i \circ a_i \circ \bar{a}_i
		\circ \bar{a}_i, \quad \lambda_i \in \mathbb{R}, a_i \in \mathbb{C}^n
		. \]
		Then
		\[ \mathcal{T}_{[3, 2, 1, 4]} = \sum_{i = 1}^r \lambda_i  \bar{a}_i
		\circ a_i \circ a_i \circ \bar{a}_i, \quad \mathcal{T}_{[3, 2 ;
			1, 4]} = \sum_{i = 1}^r \lambda_i \mathrm{vec} (\bar{a}_i \circ a_i)
		\circ \mathrm{vec} (a_i \circ \bar{a}_i) . \]
		The later is a rank-one Hermitian matrix, so there exists $Y \in \mathbb{C}^{n
			\times n}$ such that
		\[ \mathcal{T}_{[3, 2 ; 1, 4]} = \mathrm{vec} (Y) \circ \mathrm{vec}
		(\bar{Y}) \mathrm{} . \]
		Let $Y = \sum_{i = 1}^R \sigma_i x_i \circ y_i$ be the SVD of $Y$, then
		\[ \mathcal{T}_{[3, 2, 1, 4]} = \sum_{i = 1}^R \sum_{j = 1}^R \sigma_i
		\sigma_j x_i \circ y_i \circ \bar{x}_j \circ \bar{y}_j . \]
		Since the second and third indexes of $\mathcal{T}_{[3, 2, 1, 4]}$ are symmetric, we have
		\[ \mathcal{T}_{[3, 2, 1, 4]} =\mathcal{T}_{[3, 1, 2, 4]} = \sum_{i = 1}^R
		\sum_{j = 1}^R \sigma_i \sigma_j x_i \circ \bar{x}_j \circ y_i
		\circ \bar{y}_j . \]
		This means that
		\[ \mathcal{T}_{[3, 2 ; 1, 4]} = \sum_{i = 1}^R \sum_{j = 1}^R \sigma_i
		\sigma_j \mathrm{vec} (x_i \circ \bar{x}_j) \circ \mathrm{vec} (y_i
		\circ \bar{y}_j) . \]
		It is easy to verify that $\{ \mathrm{vec} (x_i \circ \bar{x}_j)\}_{i, j = 1}^R$ and
		$\{ \mathrm{vec} (z_i \circ \bar{z}_j)\}_{i, j =
			1}^R$ are orthogonal basis, hence $R = 1$. Otherwise, we will have $\tmop{rank}
		(\mathcal{T}_{[3, 2 ; 1, 4]}) = R^2 > 1$. Thus,
		\[ \mathcal{T}_{[3, 2, 1, 4]} = \sigma^2 x \circ \bar{x} \circ z
		\circ \bar{z}, \]
		that is, $\tmop{rank} (\mathcal{T}_{[3, 2, 1, 4]}) = \mathrm{rank}
		(\mathcal{T}) = \mathrm{rank}_{cps} (\mathcal{T}) = 1$.
	\end{proof}
\end{lemma}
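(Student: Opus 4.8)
The plan is to reduce the statement, via the CPS decomposition together with an SVD-and-refolding argument, to a count of matrix rank. First I would invoke the rank-one decomposition of CPS tensors (the rank-one decomposition theorem quoted above, due to Fu et al., or equivalently Lemma~\ref{lemma:CPSdecomp} applied to each term) to write $\mathcal{T}=\sum_{i=1}^{r}\lambda_i\,a_i\circ a_i\circ\bar{a}_i\circ\bar{a}_i$ with $\lambda_i\in\mathbb{R}$ and $a_i\in\mathbb{C}^n$. Reading off the unfolding, whose $((k,j),(p,l))$-entry is $\mathcal{T}_{pjkl}$, then gives
\[
\mathcal{T}_{[3,2;1,4]}=\sum_{i=1}^{r}\lambda_i\,\mathrm{vec}(\bar{a}_i\circ a_i)\circ\mathrm{vec}(a_i\circ\bar{a}_i),
\]
so $\mathcal{T}_{[3,2;1,4]}$ is a Hermitian matrix, and the hypothesis says this Hermitian matrix has rank one.

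Next I would exploit the shape of a rank-one Hermitian matrix: it equals $\sigma\,uu^{*}$ for some $\sigma\in\mathbb{R}$ and $u\in\mathbb{C}^{n^2}$, so folding $u$ into a matrix $Y\in\mathbb{C}^{n\times n}$ (absorbing $\sqrt{|\sigma|}$; the sign of $\sigma$ can ride along on one factor and is harmless for a rank-one conclusion) yields $\mathcal{T}_{[3,2;1,4]}=\pm\,\mathrm{vec}(Y)\circ\mathrm{vec}(\bar{Y})$. Taking an SVD $Y=\sum_{i=1}^{R}\sigma_i\,x_i\circ y_i$ with $\{x_i\}$ and $\{y_i\}$ orthonormal, I would refold to the tensor $\mathcal{T}_{[3,2,1,4]}$, expand the double product, and invoke the partial symmetry of $\mathcal{T}$: the two indices occupying the slot-$2$ and slot-$1$ positions of this unfolding are precisely the symmetric pair in the definition of a CPS tensor, so they may be interchanged without changing $\mathcal{T}$. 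That interchange regroups $x_i$ with $\bar{x}_j$ and $y_i$ with $\bar{y}_j$, and re-matricizing as $\mathcal{T}_{[3,2;1,4]}$ again produces
\[
\mathcal{T}_{[3,2;1,4]}=\sum_{i=1}^{R}\sum_{j=1}^{R}\sigma_i\sigma_j\,\mathrm{vec}(x_i\circ\bar{x}_j)\circ\mathrm{vec}(y_i\circ\bar{y}_j).
\]

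Then I would finish by a rank count. Since $\{x_i\}_{i=1}^{R}$ is orthonormal, the family $\{x_i\otimes\bar{x}_j\}_{i,j=1}^{R}$ is an orthonormal set of $R^{2}$ vectors (the standard Kronecker-product fact), and likewise for $\{y_i\otimes\bar{y}_j\}$; hence the last display exhibits $\mathcal{T}_{[3,2;1,4]}$ as a sum of $R^{2}$ rank-one terms with mutually orthogonal left factors and mutually orthogonal right factors, so $\mathrm{rank}(\mathcal{T}_{[3,2;1,4]})=R^{2}$. The hypothesis $\mathrm{rank}(\mathcal{T}_{[3,2;1,4]})=1$ forces $R=1$, so $Y$ is rank one and, folding back and using once more the symmetry in the first two indices to identify the two rank-one factors, $\mathcal{T}=\lambda\,x\circ x\circ\bar{x}\circ\bar{x}$ for some $\lambda\in\mathbb{R}$ and $x\in\mathbb{C}^n$; in particular $\mathcal{T}$ is rank one. (The converse implication, that a rank-one CPS tensor has rank-one unfolding, is immediate from Lemma~\ref{lemma:CPSdecomp}.)

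The step I expect to be the main obstacle is the index bookkeeping around the notations $\mathcal{T}_{[3,2;1,4]}$ and $\mathcal{T}_{[3,2,1,4]}$: verifying that the partial symmetry of $\mathcal{T}$ genuinely licenses the interchange that regroups $x_i$ with $\bar{x}_j$, and that the re-matricization after that interchange is literally $\mathcal{T}_{[3,2;1,4]}$ again rather than some other unfolding. Closely tied to this is the linear-algebra point that orthonormality of $\{x_i\}$ and $\{y_i\}$ promotes to orthonormality of the tensor-product families, since that is exactly what turns ``$R$ SVD terms'' into ``matrix rank $R^{2}$'' and thereby pins down $R=1$.
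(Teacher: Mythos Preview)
Your proposal is correct and follows essentially the same route as the paper: invoke the CPS rank-one decomposition to see that $\mathcal{T}_{[3,2;1,4]}$ is Hermitian, write the rank-one Hermitian matrix as $\mathrm{vec}(Y)\mathrm{vec}(\bar Y)^{*}$, take an SVD of $Y$, use the symmetry in the first two indices of $\mathcal{T}$ to swap the middle slots of $\mathcal{T}_{[3,2,1,4]}$, re-matricize, and read off $\mathrm{rank}(\mathcal{T}_{[3,2;1,4]})=R^{2}$ from the orthonormality of the Kronecker families, forcing $R=1$. The index bookkeeping you flag as the main obstacle is handled in the paper exactly as you outline, and your treatment of the sign $\pm\sigma$ is in fact a shade more careful than the paper's.
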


\begin{remark}
	It is pointed that a fourth-order real tensor
	$\mathcal{X}$ is rank-one if and only if $\mathcal{X}_{[1, 2 ; 3, 4]}$ and
	$\mathcal{X}_{[3, 2 ; 1, 4]}$ are rank-one matrix {\cite[Lemma 3.1]{Yuning2016Rank}}. The above Lemma
	\ref{lemma:rankEqv} reinforces this result for the CPS tensor. However only
	$\mathcal{T}_{[1, 2 ; 3, 4]}$ being rank-one cannot guarantee a partial-symmetric tensor $\mathcal{T}$ being rank-one. For example, the following $\mathcal{T} \in
	\mathbb{R}^{2 \times 2 \times 2 \times 2}$ is a partial-symmetric tensor,
	\[ \mathcal{T}= e_1 \circ e_1 \circ e_1 \circ e_1 + e_1 \circ e_1 \circ e_2
	\circ e_2 + e_2 \circ e_2 \circ e_1 \circ e_1 + e_2 \circ e_2 \circ e_2
	\circ e_2 . \]
	According to $\mathcal{T}_{[1, 3 ; 2, 4]} = I \in \mathbb{R}^{4 \times 4}$,
	we have $\tmop{rank} (\mathcal{T}) > 1$. However,
	\[ \mathcal{T} [1, 2 ; 3, 4] = \left[ \begin{array}{cccc}
		1 & 0 & 0 & 1\\
		0 & 0 & 0 & 0\\
		0 & 0 & 0 & 0\\
		1 & 0 & 0 & 1
	\end{array} \right] \]
	is a rank-one matrix.
\end{remark}

\subsection{Applications to the best rank-one approximation problem}

This section concentrates on the best rank-one approximation problem,
\begin{equation}
	\label{model:rank1App} \min \|\mathcal{T}- \lambda x \circ x \circ
	\bar{x} \circ \bar{x} \|_F^2  \quad s.t. \lambda \in \mathbb{R}, x \in
	\mathbb{C}^n, \|x\| = 1,
\end{equation}
or its equivalent formulation
\begin{equation}\label{equ:temp}
	\max \mid\langle \mathcal{T}, x \circ x \circ
	\bar{x} \circ \bar{x} \rangle\mid  \quad s.t.x \in \mathbb{C}^n, \|x\| = 1,
\end{equation}
where $\mathcal{T} \in \mathbb{C}^{n^4}_{ps}$ is the conjugate partial-symmetric tensor. Solving \eqref{equ:temp} is further equal to solve two minimization problems,  $\min\left<\mathcal{T},x \circ x \circ
	\bar{x} \circ \bar{x}\right>$ and $\min\left<-\mathcal{T},x \circ x \circ
	\bar{x} \circ \bar{x}\right>$ over $\{x\in\mathbb{C}^n,\|x\|=1\}$.

Without loss of generality, we only consider the following problem,
\begin{equation}
	\label{model:rank1AppEqv} \min \langle -\mathcal{T}, x \circ x \circ
	\bar{x} \circ \bar{x} \rangle  \quad s.t.x \in \mathbb{C}^n, \|x\| = 1.
\end{equation}

\subsubsection{Convex relaxations of (\ref{model:rank1AppEqv})}

Let $\mathcal{X} \assign x \circ x \circ \bar{x} \circ \bar{x}$,
the (\ref{model:rank1AppEqv}) can be reformulated equivalently as
\begin{equation}
	\label{model:rank1AppEqv1} \min - \langle \mathcal{T}, \mathcal{X} \rangle
	\quad s.t.\mathcal{X} \in \mathbb{C}^{n^4}_{ps}, \mathrm{rank} (\mathcal{X})
	= 1, \|\mathcal{X}\|_F = 1.
\end{equation}

According to
\eqref{lemma:rankEqv}, the rank-one tensor constraint $rank(\mathcal{A})=1$ can be replaced by a rank-one matrix constraint, i.e.,
(\ref{model:rank1AppEqv1}) can be equivalently reformulated into

\begin{equation}
	\label{model:matRankCons} \min \langle -\mathcal{T}, \mathcal{X} \rangle
	\quad s.t.\mathcal{X} \in \mathbb{C}^{n^4}_{ps}, \mathrm{rank}
	(\mathcal{X}_{[3, 2 ; 1, 4]}) = 1, \|\mathcal{X}\|_F = 1.
\end{equation}

In recent years, it is popular to replace the rank function by the matrix nuclear norm. After relaxation, one expects to obtain a low rank solution via solving the nuclear norm based problems. The nuclear norm
$\| \cdummy \|_{\ast}$ is defined as the sum of singular values of a matrix
{\cite{2010Guaranteed}}. Following this line,  (\ref{model:matRankCons})
can be relaxed to the following nuclear norm regularized convex optimization problem
\begin{equation}
	\label{model:conv1} \min \langle -\mathcal{T}, \mathcal{X} \rangle + \rho
	\|\mathcal{X}_{[3, 2 ; 1, 4]} \|_{\ast}  \quad s.t.\mathcal{X} \in
	\mathbb{C}^{n^4}_{ps}, \|\mathcal{X}\|_F \leq 1,
\end{equation}
where $\rho > 0$ is a regularization parameter.
Analogously, we can also employ the nuclear norms as constraints, which results in the following problem
\begin{eqnarray}
	\min \langle -\mathcal{T}, \mathcal{X} \rangle  \quad s.t.\mathcal{X} \in
	\mathbb{C}^{n^4}_{ps}, \|\mathcal{X}_{[3, 2 ; 1, 4]} \|_{\ast} \leq 1.
	\label{model:conv2}
\end{eqnarray}

\paragraph{Determining whether a solution to (\ref{model:conv1}) or (\ref{model:conv2})
	is a global minimizer of (\ref{model:matRankCons}).}

{\cite{Yuning2016Rank}} proved that for $X \in \mathbb{R}^{M \times
	N}$, if $\|X\|_F = 1$ and $\|X\|_{\ast} = 1$, then
\tmtextrm{rank}$(X) = 1$. Based on this observation, we discuss in which case, the optimizer of
(\ref{model:conv1}) is the global minimizer of the original problem
(\ref{model:matRankCons}). Denote $\hat{p}$ as the optimal value of
(\ref{model:conv1}), the following corollary is presented.

\begin{corollary}
	\label{prop:solutionpro1}If $\hat{\mathcal{X}} \neq 0$
	is an optimal value to (\ref{model:conv1}), $\hat{p} \neq 0$, and
	$\mathcal{X}^{\ast} $ is a global minimizer of (\ref{model:matRankCons}), then
	\begin{enumerate}
		\item $\| \hat{\mathcal{X}} \|_F = 1$,
		
		\item $\| \hat{\mathcal{X}}_{[3, 2 ; 1, 4]} \|_{\ast} \geq 1$,
		
		\item if $\| \hat{\mathcal{X}}_{[3, 2 ; 1, 4]} \|_{\ast} = 1$, then
		\tmtextrm{rank}$_{\tmop{CP}} (\hat{\mathcal{X}}) = 1$,
		
		\item $\lambda^{\ast} \leq \hat{\lambda}$, where $\lambda^{\ast} =
		\langle \mathcal{T}, \mathcal{X}^{\ast} \rangle$, $\hat{\lambda} = \langle
		\mathcal{T}, \hat{\mathcal{X}} \rangle$.
	\end{enumerate}
\end{corollary}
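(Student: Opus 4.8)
The plan is to verify the four claims in turn, using only the positive homogeneity of the objective of (\ref{model:conv1}), the elementary bound $\|M\|_{\ast} \ge \|M\|_F$ valid for every matrix $M$, the fact that a matrix unfolding preserves the Frobenius norm, and the rank-one equivalence of Lemma~\ref{lemma:rankEqv}.

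A preliminary observation I would make first is that $\hat{p} < 0$: the point $\mathcal{X}=0$ is feasible for (\ref{model:conv1}) with objective value $0$, so $\hat{p} \le 0$, and the hypothesis $\hat{p} \neq 0$ forces $\hat{p} < 0$. For claim~1, I argue by contradiction: if $0 < \|\hat{\mathcal{X}}\|_F = c < 1$ then $\hat{\mathcal{X}}/c$ is still feasible (its Frobenius norm is $1 \le 1$ and, $c$ being real positive, it stays conjugate partial-symmetric), and since $\langle -\mathcal{T},\cdot\rangle$ and $\|(\cdot)_{[3,2;1,4]}\|_{\ast}$ are both positively homogeneous of degree one, the objective at $\hat{\mathcal{X}}/c$ equals $\hat{p}/c < \hat{p}$, contradicting optimality; hence $\|\hat{\mathcal{X}}\|_F = 1$. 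Claim~2 is then immediate, because unfolding does not change the Frobenius norm, so $\|\hat{\mathcal{X}}_{[3,2;1,4]}\|_{\ast} \ge \|\hat{\mathcal{X}}_{[3,2;1,4]}\|_F = \|\hat{\mathcal{X}}\|_F = 1$.

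For claim~3, combine claim~1 with the extra hypothesis $\|\hat{\mathcal{X}}_{[3,2;1,4]}\|_{\ast} = 1$: this produces a matrix whose nuclear and Frobenius norms both equal $1$, and equality in $\big(\sum_i \sigma_i\big)^2 \ge \sum_i \sigma_i^2$ forces at most one nonzero singular value, so $\hat{\mathcal{X}}_{[3,2;1,4]}$ is rank-one (the argument from \cite{Yuning2016Rank} carries over verbatim to complex matrices). Since $\hat{\mathcal{X}} \in \mathbb{C}^{n^4}_{ps}$, Lemma~\ref{lemma:rankEqv} then gives that $\hat{\mathcal{X}}$ is a rank-one tensor, i.e.\ $\mathrm{rank}_{\mathrm{CP}}(\hat{\mathcal{X}}) = 1$. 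For claim~4, I would test (\ref{model:conv1}) at the competitor $\mathcal{X}^{\ast}$: it is feasible there ($\mathcal{X}^{\ast} \in \mathbb{C}^{n^4}_{ps}$, $\|\mathcal{X}^{\ast}\|_F = 1 \le 1$), and since $\mathcal{X}^{\ast}_{[3,2;1,4]}$ is rank-one with Frobenius norm $1$ its nuclear norm is exactly $1$, so the objective of (\ref{model:conv1}) at $\mathcal{X}^{\ast}$ equals $-\lambda^{\ast} + \rho$. Optimality of $\hat{\mathcal{X}}$ yields $-\hat{\lambda} + \rho\|\hat{\mathcal{X}}_{[3,2;1,4]}\|_{\ast} = \hat{p} \le -\lambda^{\ast} + \rho$, and using claim~2 to bound $\rho\|\hat{\mathcal{X}}_{[3,2;1,4]}\|_{\ast}$ below by $\rho$ on the left-hand side gives $-\hat{\lambda} + \rho \le -\lambda^{\ast} + \rho$, that is $\lambda^{\ast} \le \hat{\lambda}$.

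I do not expect a serious obstacle here; the only places that need care are the two homogeneity arguments (claims~1 and~4), where one must use the relaxed normalization $\|\mathcal{X}\|_F \le 1$ rather than $\|\mathcal{X}\|_F = 1$ to keep the rescaled/candidate point admissible, and must recall that the hypothesis $\hat{p} \neq 0$ is genuinely needed — without it the rescaling in claim~1 produces no contradiction.
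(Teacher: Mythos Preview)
Your proposal is correct and follows essentially the same approach as the paper's own proof: the rescaling contradiction for claim~1, the nuclear-versus-Frobenius inequality for claim~2, Lemma~\ref{lemma:rankEqv} for claim~3, and the feasibility comparison at $\mathcal{X}^{\ast}$ for claim~4. If anything, you are slightly more careful in justifying $\hat{p}<0$ (the paper implicitly uses $\hat{p}\neq 0$ there as well) and in spelling out why the rescaled and candidate points remain feasible.
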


\begin{proof}
	1. Since $\mathcal{X}= 0$ is a feasible solution to (\ref{model:conv1}), it holds that
	$\hat{p} < 0$. Suppose $\| \hat{\mathcal{X}} \|_F < 1$, then
	$\hat{\mathcal{X}} / \| \hat{\mathcal{X}} \|_F$
	is also a feasible solution and the objective value evaluated at it is $\hat{p}/\| \hat{\mathcal{X}} \|_F<\hat{p}$, which gives a contradiction to the optimality of $\hat{\mathcal{X}}$.
	
	\quad 2. Denote $\hat{\sigma}_i, i = 1, \ldots, r$ as the singular values of $\hat{\mathcal{X}}_{[3,
		2 ; 1, 4]}$, then according to $\| \hat{\mathcal{X}} \|_F = \sum_{i =
		1}^r \hat{\sigma}_i^2$ and $\| \hat{\mathcal{X}}_{[3, 2 ; 1, 4]} \|_{\ast} =
	\sum_{i = 1}^r \hat{\sigma}_i$, it can be seen that when $\| \hat{\mathcal{X}} \|_F =
	1$, $\| \hat{\mathcal{X}}_{[3, 2 ; 1, 4]} \|_{\ast} \geq \|
	\hat{\mathcal{X}} \|_F^2 = 1$.
	
	\quad 3. It follows from \eqref{lemma:rankEqv} immediately.
	
	\quad 4. Since $\mathcal{X}^{\ast}_{[3, 2 ; 1, 4]}$ is rank-one and
	$\|\mathcal{X}^{\ast} \|_F = 1$, we have $\|\mathcal{X}^{\ast}_{[3, 2 ; 1, 4]}
	\|_{\ast} = 1$ and $- \lambda^{\ast} + \rho \geq - \hat{\lambda} + \rho \|
	\hat{\mathcal{X}}_{[3, 2 ; 1, 4]} \|_{\ast}$. Combined with $\|
	\hat{\mathcal{X}}_{[3, 2 ; 1, 4]} \|_{\ast} \geq 1$, it is obtained that $\lambda^{\ast}
	\leq \hat{\lambda}$.
\end{proof}

Corollary \ref{prop:solutionpro1} implies that it is possible to identify whether an optimizer of
(\ref{model:conv1}) is an optimizer of (\ref{model:matRankCons})
by computing the sum of balanced nuclear norms. The following theorem summarizes this result.

\begin{theorem}
	Assume that $\mathcal{T} \neq 0$ and $\hat{\mathcal{X}}$ is a global minimizer of
	(\ref{model:conv1}). Then $\hat{\mathcal{X}}$ is a global optimizer of
	(\ref{model:matRankCons}) if and only if $\hat{\mathcal{X}}
	\neq 0, \| \hat{\mathcal{X}}_{[3, 2 ; 1, 4]} \|_{\ast} = 1$ and $\hat{p}
	\neq 0$.
\end{theorem}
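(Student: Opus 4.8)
The plan is to prove the two implications separately, letting Corollary \ref{prop:solutionpro1} carry most of the load; the only additional facts I need are the trivial half of Lemma \ref{lemma:rankEqv} (a rank-one tensor has rank-one unfoldings), that the reshaping $\mathcal{X}\mapsto\mathcal{X}_{[3,2;1,4]}$ preserves the Frobenius norm and the CPS structure, and that a rank-one matrix $M$ satisfies $\|M\|_\ast=\|M\|_F$.

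For the \emph{if} direction, I would assume $\hat{\mathcal{X}}\neq 0$, $\|\hat{\mathcal{X}}_{[3,2;1,4]}\|_\ast=1$ and $\hat p\neq 0$. First note that the feasible set of (\ref{model:matRankCons}) is nonempty and compact, being the image of the unit sphere of $\mathbb{C}^n$ (times a sign) under the continuous map $x\mapsto x\circ x\circ\bar x\circ\bar x$ of Lemma \ref{lemma:CPSdecomp}, so a global minimizer $\mathcal{X}^\ast$ exists and Corollary \ref{prop:solutionpro1} applies. Part (1) gives $\|\hat{\mathcal{X}}\|_F=1$; part (3), invoked with $\|\hat{\mathcal{X}}_{[3,2;1,4]}\|_\ast=1$, gives $\mathrm{rank}_{\mathrm{CP}}(\hat{\mathcal{X}})=1$, so $\hat{\mathcal{X}}$ — which is CPS because it is feasible for (\ref{model:conv1}) — has a rank-one unfolding and hence is feasible for (\ref{model:matRankCons}); part (4) gives $\lambda^\ast\le\hat\lambda$. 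Combining this with $\hat\lambda\le\lambda^\ast$, which follows from feasibility of $\hat{\mathcal{X}}$ together with optimality of $\mathcal{X}^\ast$, yields $\hat\lambda=\lambda^\ast$, so $\hat{\mathcal{X}}$ attains the optimal value of (\ref{model:matRankCons}) and is a global optimizer.

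For the \emph{only if} direction, I would assume $\hat{\mathcal{X}}$ is a global optimizer of (\ref{model:matRankCons}). Feasibility forces $\|\hat{\mathcal{X}}\|_F=1$, so $\hat{\mathcal{X}}\neq 0$, and $\mathrm{rank}(\hat{\mathcal{X}}_{[3,2;1,4]})=1$, whence $\|\hat{\mathcal{X}}_{[3,2;1,4]}\|_\ast=\|\hat{\mathcal{X}}_{[3,2;1,4]}\|_F=\|\hat{\mathcal{X}}\|_F=1$. It remains to show $\hat p\neq 0$. Since $\hat{\mathcal{X}}$ is by assumption also a global minimizer of (\ref{model:conv1}) and lies in its feasible region, $\hat p=\langle-\mathcal{T},\hat{\mathcal{X}}\rangle+\rho$, and since $\mathcal{X}=0$ is feasible for (\ref{model:conv1}), $\hat p\le 0$. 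Using $\mathcal{T}\neq 0$: expanding $\langle\mathcal{T},u\circ u\circ\bar u\circ\bar u\rangle=\sum_{ijkl}\mathcal{T}_{ijkl}\bar u_i\bar u_j u_k u_l$ as a polynomial in $(u,\bar u)$ shows it cannot vanish identically unless $\mathcal{T}=0$, so the best rank-one value $\beta:=\max_{\|u\|=1}|\langle\mathcal{T},u\circ u\circ\bar u\circ\bar u\rangle|$ is positive; since $\hat{\mathcal{X}}$ is rank-one, has $\|\hat{\mathcal{X}}\|_F=1$ and is optimal for (\ref{model:matRankCons}), one gets $\langle-\mathcal{T},\hat{\mathcal{X}}\rangle=-\beta$, hence $\hat p=\rho-\beta$, which together with $\hat p\le 0$ is nonzero precisely when $\rho<\beta$.

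I expect this last point to be the main obstacle: the equivalence is clean only when the regularization parameter $\rho$ is smaller than the best rank-one value $\beta$ — equivalently, when (\ref{model:conv1}) has a strictly negative optimal value, so that $\mathcal{X}=0$ is not itself a minimizer. At the degenerate threshold $\rho=\beta$ one can have $0$ and a normalized best rank-one tensor both minimizing (\ref{model:conv1}) with $\hat p=0$, and the forward implication fails; I would therefore carry out the argument under this mild standing assumption on $\rho$. Everything else — existence of $\mathcal{X}^\ast$, the norm/structure bookkeeping under reshaping, the realness of $\langle\mathcal{T},u\circ u\circ\bar u\circ\bar u\rangle$ for CPS $\mathcal{T}$, and the polynomial non-vanishing argument — is routine.
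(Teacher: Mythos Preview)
Your approach is exactly the one the paper intends: the theorem is stated there without proof, as a direct summary of Corollary~\ref{prop:solutionpro1}, and your ``if'' direction is precisely that derivation (parts (1), (3), (4) plus feasibility). The paper adds nothing beyond what you wrote.

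Your concern in the ``only if'' direction is legitimate and is in fact an oversight in the paper's statement rather than in your argument. Concretely, take $\mathcal{T}=x\circ x\circ\bar x\circ\bar x$ with $\|x\|=1$ and $\rho=1$; then for any feasible $\mathcal{X}$ one has $\langle-\mathcal{T},\mathcal{X}\rangle\ge-\|\mathcal{T}_{[3,2;1,4]}\|_2\,\|\mathcal{X}_{[3,2;1,4]}\|_\ast=-\|\mathcal{X}_{[3,2;1,4]}\|_\ast$, so the objective of (\ref{model:conv1}) is nonnegative and both $\mathcal{X}=0$ and $\mathcal{X}=\mathcal{T}$ attain $\hat p=0$. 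Here $\hat{\mathcal{X}}=\mathcal{T}$ is simultaneously a global minimizer of (\ref{model:conv1}) and of (\ref{model:matRankCons}), yet $\hat p=0$, so the forward implication fails as stated. Your proposed fix---assume $\rho$ strictly below the best rank-one value $\beta$, or equivalently $\hat p<0$---is the right standing hypothesis, and with it the proof is complete.
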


Next we study (\ref{model:conv2}). The following observations also characterize the conditions for the optimizer of (\ref{model:conv2})
being the global optimizer of the original problem.

\begin{corollary}
	\label{prop:solutionpro2}Assume $\mathcal{T} \neq 0$. If
	$\hat{\mathcal{X}}$ is an optimal solution of (\ref{model:conv2}), then
	\begin{enumerate}
		\item  $\| \hat{\mathcal{X}}_{[3, 2 ; 1, 4]} \|_{\ast} = 1$,
		
		\item $\| \hat{\mathcal{X}} \|_F \leq 1$,
		
		\item if $\| \hat{\mathcal{X}} \|_F = 1$, then $\tmop{rank}_{\tmop{CP}}
		(\hat{\mathcal{X}}) = 1$.
	\end{enumerate}
\end{corollary}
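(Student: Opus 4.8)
The plan is to mirror the structure of the proof of Corollary \ref{prop:solutionpro1}, exploiting the feasible-set geometry of (\ref{model:conv2}) together with the two elementary facts that for a matrix $M$ one has $\|M\|_F^2 = \sum_i \sigma_i^2$, $\|M\|_\ast = \sum_i \sigma_i$, and that $\sum_i \sigma_i^2 \le \bigl(\sum_i \sigma_i\bigr)^2$ when all $\sigma_i \ge 0$. I would treat the three items in order, since item~3 depends on items~1 and~2.

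For item~1, I would argue by a scaling/contradiction argument analogous to part~1 of the previous proof. Since $\mathcal{T}\neq 0$, the zero tensor is not optimal, so the optimal value of (\ref{model:conv2}) is strictly negative and in particular $\langle -\mathcal{T},\hat{\mathcal{X}}\rangle < 0$, forcing $\hat{\mathcal{X}}\neq 0$ and hence $\|\hat{\mathcal{X}}_{[3,2;1,4]}\|_\ast > 0$. If $\|\hat{\mathcal{X}}_{[3,2;1,4]}\|_\ast < 1$, then $\hat{\mathcal{X}}/\|\hat{\mathcal{X}}_{[3,2;1,4]}\|_\ast$ is still feasible (the balanced nuclear norm scales linearly and the CPS structure is preserved under scalar multiplication), and its objective value is $\langle -\mathcal{T},\hat{\mathcal{X}}\rangle/\|\hat{\mathcal{X}}_{[3,2;1,4]}\|_\ast$, which is strictly smaller than $\langle -\mathcal{T},\hat{\mathcal{X}}\rangle$ because the latter is negative and we are dividing by a number less than $1$; this contradicts optimality. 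Hence $\|\hat{\mathcal{X}}_{[3,2;1,4]}\|_\ast = 1$.

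For item~2, I would combine item~1 with the singular-value inequality: writing $\hat{\sigma}_i$ for the singular values of $\hat{\mathcal{X}}_{[3,2;1,4]}$, we have $\|\hat{\mathcal{X}}\|_F^2 = \sum_i \hat{\sigma}_i^2 \le \bigl(\sum_i \hat{\sigma}_i\bigr)^2 = \|\hat{\mathcal{X}}_{[3,2;1,4]}\|_\ast^2 = 1$, so $\|\hat{\mathcal{X}}\|_F \le 1$. Here I would use that $\|M(\mathcal{X})\|_F = \|\mathcal{X}_{[3,2;1,4]}\|_F = \|\mathcal{X}\|_F$ since all these unfoldings merely rearrange the same entries. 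For item~3, if in addition $\|\hat{\mathcal{X}}\|_F = 1$, then the inequality $\sum_i \hat{\sigma}_i^2 \le \bigl(\sum_i \hat{\sigma}_i\bigr)^2$ holds with equality, which forces exactly one nonzero singular value, i.e. $\mathrm{rank}(\hat{\mathcal{X}}_{[3,2;1,4]}) = 1$; then Lemma \ref{lemma:rankEqv} gives that $\hat{\mathcal{X}}$ is a rank-one tensor, and since it is CPS it is in fact rank-one in the CP sense, so $\mathrm{rank}_{\mathrm{CP}}(\hat{\mathcal{X}}) = 1$.

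The only mildly delicate point — the "main obstacle" — is item~1: one must be sure that rescaling $\hat{\mathcal{X}}$ keeps it in the feasible set $\mathbb{C}^{n^4}_{ps}$ and that the balanced nuclear norm really is positively homogeneous of degree one, so that dividing by a quantity in $(0,1)$ strictly decreases a negative objective. Both are immediate (the CPS constraints are linear and homogeneous, and $\|c\,M\|_\ast = |c|\,\|M\|_\ast$), but they are what makes the scaling argument legitimate. Everything else is the elementary $\ell^1$–$\ell^2$ comparison together with Lemma \ref{lemma:rankEqv}.
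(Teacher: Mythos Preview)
Your proposal is correct and follows essentially the same route as the paper: the scaling/contradiction argument for item~1, the $\ell^1$--$\ell^2$ singular-value comparison for item~2, and Lemma~\ref{lemma:rankEqv} for item~3. The only small point to tighten is in item~1: the claim ``since $\mathcal{T}\neq 0$, the zero tensor is not optimal'' needs a feasible point with negative objective, and the paper supplies one explicitly, namely $\mathcal{T}/\|\mathcal{T}_{[3,2;1,4]}\|_\ast$, which is CPS, satisfies the nuclear-norm constraint, and gives objective value $-\|\mathcal{T}\|_F^2/\|\mathcal{T}_{[3,2;1,4]}\|_\ast<0$.
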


\begin{proof}
	1. Since $\mathcal{T}/ \|\mathcal{T}_{[3, 2 ; 1, 4]} \|_{\ast}$
	is a feasible solution and the corresponding objective value is negative, so the optimal value of
	(\ref{model:conv2}) must be negative. Suppose $\|
	\hat{\mathcal{X}}_{[3, 2 ; 1, 4]} \|_{\ast} < 1$, then $\hat{\mathcal{X}}
	/ \| \hat{\mathcal{X}}_{[3, 2 ; 1, 4]} \|_{\ast}$
	is a feasible solution, whose objective value is smaller than the optimal value. It gives a contradiction.
	
	\quad 2. When $\| \hat{\mathcal{X}}_{[3, 2 ; 1, 4]} \|_{\ast} \leqslant
	1$, $\|\mathcal{X}\|_F^2 \leq \| \hat{\mathcal{X}}_{[3, 2 ; 1, 4]}
	\|_{\ast} \leqslant 1$.
	
	\quad 3. It follows from  Lemma  \ref{lemma:rankEqv}.
\end{proof}

Based on Corollary \ref{prop:solutionpro2}, we have the following theorem.

\begin{theorem}
	Assume $\mathcal{T} \neq 0$, then the optimizer $\hat{\mathcal{X}}$ of (\ref{model:conv2})
	is also a global optimizer of (\ref{model:matRankCons})
	if and only if $\| \hat{\mathcal{X}} \|_F = 1$.
\end{theorem}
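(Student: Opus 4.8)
The plan is to prove the equivalence by combining Corollary \ref{prop:solutionpro2} with the rank-one equivalence of Lemma \ref{lemma:rankEqv}, handling the two directions separately. Throughout, $\hat{\mathcal{X}}$ denotes a global optimizer of (\ref{model:conv2}), and we may assume $\mathcal{T}\neq 0$ so that, as shown in the proof of Corollary \ref{prop:solutionpro2}, the optimal value of (\ref{model:conv2}) is strictly negative and $\|\hat{\mathcal{X}}_{[3,2;1,4]}\|_{\ast}=1$.

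First I would prove the ``if'' direction: assume $\|\hat{\mathcal{X}}\|_F=1$. By part 3 of Corollary \ref{prop:solutionpro2}, $\hat{\mathcal{X}}$ then has CP-rank one, hence $\hat{\mathcal{X}}_{[3,2;1,4]}$ is a rank-one matrix and (by Lemma \ref{lemma:CPSdecomp}) $\hat{\mathcal{X}}=\lambda\, x\circ x\circ\bar x\circ\bar x$ with $\|\hat{\mathcal{X}}\|_F=1$, so $\hat{\mathcal{X}}$ is feasible for (\ref{model:matRankCons}). It remains to check optimality: any feasible $\mathcal{X}$ of (\ref{model:matRankCons}) satisfies $\mathrm{rank}(\mathcal{X}_{[3,2;1,4]})=1$ and $\|\mathcal{X}\|_F=1$, which forces $\|\mathcal{X}_{[3,2;1,4]}\|_{\ast}=1$ (the single nonzero singular value equals $\|\mathcal{X}\|_F=1$), so $\mathcal{X}$ is feasible for (\ref{model:conv2}). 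Hence $\langle-\mathcal{T},\hat{\mathcal{X}}\rangle\le\langle-\mathcal{T},\mathcal{X}\rangle$ for every feasible $\mathcal{X}$ of (\ref{model:matRankCons}), which is exactly global optimality of $\hat{\mathcal{X}}$ for (\ref{model:matRankCons}).

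Next I would prove the ``only if'' direction by contraposition: suppose $\|\hat{\mathcal{X}}\|_F<1$ (the case $\|\hat{\mathcal{X}}\|_F\le 1$ is all that is available from Corollary \ref{prop:solutionpro2}), and show $\hat{\mathcal{X}}$ cannot be a global optimizer of (\ref{model:matRankCons}). The idea is to exhibit a feasible point of (\ref{model:matRankCons}) with strictly smaller objective value. Let $\mathcal{X}^{\ast}=\lambda^{\ast}x^{\ast}\circ x^{\ast}\circ\bar x^{\ast}\circ\bar x^{\ast}$ be any global minimizer of (\ref{model:matRankCons}); it is feasible for (\ref{model:conv2}) since $\|\mathcal{X}^{\ast}_{[3,2;1,4]}\|_{\ast}=\|\mathcal{X}^{\ast}\|_F^2=1$, so by optimality of $\hat{\mathcal{X}}$ in (\ref{model:conv2}) we get $\langle-\mathcal{T},\hat{\mathcal{X}}\rangle\le\langle-\mathcal{T},\mathcal{X}^{\ast}\rangle$. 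On the other hand, the map $t\mapsto \langle-\mathcal{T},t\hat{\mathcal{X}}\rangle$ is linear with value $<0$ at $t=1$, and $t\hat{\mathcal{X}}$ stays feasible for (\ref{model:conv2}) for $t\in[0,1/\|\hat{\mathcal{X}}_{[3,2;1,4]}\|_{\ast}]=[0,1]$; combined with the fact that scaling $\hat{\mathcal{X}}$ up toward Frobenius norm one would require passing through the constraint boundary, I would argue that $\langle-\mathcal{T},\hat{\mathcal{X}}\rangle=\langle-\mathcal{T},\mathcal{X}^{\ast}\rangle$, i.e. $\hat{\mathcal{X}}$ is itself a global optimizer of (\ref{model:conv2}) attaining the same value as $\mathcal{X}^{\ast}$. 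Then $\hat{\mathcal{X}}/\|\hat{\mathcal{X}}\|_F$ has $\|\cdot\|_F=1$ but $\|(\hat{\mathcal{X}}/\|\hat{\mathcal{X}}\|_F)_{[3,2;1,4]}\|_{\ast}=1/\|\hat{\mathcal{X}}\|_F>1$, so it is \emph{not} feasible for (\ref{model:matRankCons}); yet its objective value $\langle-\mathcal{T},\hat{\mathcal{X}}\rangle/\|\hat{\mathcal{X}}\|_F<\langle-\mathcal{T},\hat{\mathcal{X}}\rangle=\langle-\mathcal{T},\mathcal{X}^{\ast}\rangle$ would beat the optimal value of (\ref{model:matRankCons}) if $\hat{\mathcal{X}}$ itself were feasible there --- which it is not, precisely because $\mathrm{rank}(\hat{\mathcal{X}}_{[3,2;1,4]})$ need not be one when $\|\hat{\mathcal{X}}\|_F<1$. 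Turning this into a clean contradiction is where I expect to need care: the sharp statement is that if $\|\hat{\mathcal{X}}\|_F<1$ then $\hat{\mathcal{X}}_{[3,2;1,4]}$ has rank $>1$ (otherwise $\|\hat{\mathcal{X}}_{[3,2;1,4]}\|_{\ast}=\|\hat{\mathcal{X}}\|_F^2<1$, contradicting part 1 of Corollary \ref{prop:solutionpro2}), so $\hat{\mathcal{X}}$ fails the rank constraint and cannot be optimal for (\ref{model:matRankCons}); that is the contrapositive we want.

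The main obstacle is the ``only if'' direction's bookkeeping with the two different normalizations ($\|\cdot\|_F$ versus $\|\cdot_{[3,2;1,4]}\|_{\ast}$) and making rigorous that a global optimizer of (\ref{model:conv2}) with $\|\hat{\mathcal{X}}\|_F<1$ genuinely violates the rank-one constraint of (\ref{model:matRankCons}); once that rank obstruction is pinned down via part 1 of Corollary \ref{prop:solutionpro2}, the argument closes. The ``if'' direction is routine given Corollary \ref{prop:solutionpro2}(3) and Lemma \ref{lemma:rankEqv}.
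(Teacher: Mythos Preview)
Your ``if'' direction is correct and is exactly the argument implicit in the paper (which gives no detailed proof, only the remark that the theorem follows from Corollary \ref{prop:solutionpro2}): once $\|\hat{\mathcal{X}}\|_F=1$, part 3 of that corollary forces $\mathrm{rank}_{\mathrm{CP}}(\hat{\mathcal{X}})=1$, hence $\hat{\mathcal{X}}$ is feasible for (\ref{model:matRankCons}); and since every feasible point of (\ref{model:matRankCons}) has $\|\mathcal{X}_{[3,2;1,4]}\|_\ast=\|\mathcal{X}\|_F=1$, it is also feasible for (\ref{model:conv2}), so optimality carries over. One small slip: for a rank-one matrix the nuclear norm equals the Frobenius norm, not its square, so the identity you use should read $\|\mathcal{X}_{[3,2;1,4]}\|_\ast=\|\mathcal{X}\|_F$ (the conclusion is unaffected since both equal $1$).

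Your ``only if'' direction, however, is badly overworked. The constraint $\|\mathcal{X}\|_F=1$ is \emph{explicit} in (\ref{model:matRankCons}), so if $\hat{\mathcal{X}}$ is a global optimizer there it is in particular feasible, and $\|\hat{\mathcal{X}}\|_F=1$ follows immediately --- no contrapositive, no comparison with $\mathcal{X}^\ast$, no scaling argument. Your detour introduces an unjustified equality $\langle-\mathcal{T},\hat{\mathcal{X}}\rangle=\langle-\mathcal{T},\mathcal{X}^\ast\rangle$ (only the inequality $\le$ is available), and then discards it anyway. The final ``sharp statement'' you land on (that $\|\hat{\mathcal{X}}\|_F<1$ forces $\mathrm{rank}(\hat{\mathcal{X}}_{[3,2;1,4]})>1$ via part 1 of Corollary \ref{prop:solutionpro2}) is a valid contrapositive, but it is still a roundabout way of saying that $\hat{\mathcal{X}}$ is infeasible for (\ref{model:matRankCons}); the direct infeasibility via the Frobenius constraint is simpler and is what the paper has in mind.
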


\paragraph{Implementation of \eqref{model:conv1} via ADMM.}
By introducing a variable $\mathcal{y}$, problem \eqref{model:conv1} can be formulated as
\begin{equation}\label{equ:admm_conv1}
\min\langle-\mathcal{T},\mathcal{X}\rangle+\rho\|\mathcal{Y}_{[3,2;1,4]}\|_*\quad
s.t. \mathcal{X}\in\mathbb{C}_{ps}^{n^4},\|\mathcal{X}\|_F\le 1,~\mathcal{Y}=\mathcal{X}.
\end{equation}
Denote the Lagrangian function of \eqref{equ:admm_conv1} as
$$\mathcal{L}(\mathcal{X},\mathcal{Y},\Lambda)=
\langle-\mathcal{T},\mathcal{X}\rangle+\rho\|\mathcal{Y}_{[3,2;1,4]}\|_*+
\langle\Lambda,\mathcal{Y-X}\rangle+\frac{\tau}{2}\|\mathcal{Y}-\mathcal{X}\|_F^2.$$
The ADMM iteration steps for \eqref{equ:admm_conv1} are given as follows
\begin{align}
\mathcal{X}^{k+1}&=\underset{\|\mathcal{X}\|_F\le 1,\mathcal{X}\in\mathbb{C}_{ps}^{n^4}}{\arg\min}\mathcal{L}(\mathcal{X},\mathcal{Y}^k,\Lambda^k)
=\frac{P_{cps}[\frac{1}{\tau}(\tau\mathcal{Y}^k+\mathcal{T}+\Lambda^k)]}{\|P_{cps}[\frac{1}{\tau}(\tau\mathcal{Y}^k+\mathcal{T}+\Lambda^k)]\|_F}\\
\mathcal{Y}^{k+1}&=\arg\min\mathcal{L}(\mathcal{X}^{k+1},\mathcal{Y},\Lambda^k);\\
\Lambda^{k+1}&=\Lambda^k+\tau(\mathcal{Y}^{k+1}-\mathcal{X}^{k+1}).
\end{align}
To compute the subproblem of $\mathcal{Y}$, we first compute
\[\mathcal{Y}_{[3,2;1,4]}^{k+1}=D_{\frac{\rho}{\tau}}[(\mathcal{X}-\frac{1}{\tau}\Lambda^k)_{[3,2;1,4]}],\]
then fold the matrix $\mathcal{Y}_{[3,2;1,4]}^{k+1}$ back into $\mathcal{Y}^{k+1}$. $P_{cps}(\cdot)$ denotes the projection onto the set $\mathbb{C}_{ps}^{n^4}$, which is given by the following lemma.

\begin{lemma}
	\label{lemma:proj}Let $\mathcal{Y} \in \mathbb{C}^{n \times n \times n
		\times n}$,
	\begin{equation*}
		 P_{cps} (\mathcal{Y}) =
		 \frac{1}{8}  (\mathcal{Y}+\mathcal{Y}_{[1, 2, 4, 3]} +\mathcal{Y}_{[2,
			1, 3, 4]} +\mathcal{Y}_{[2, 1, 4, 3]} + \overline{\mathcal{Y}_{[3, 4, 1,
				2]}} + \overline{\mathcal{Y}_{[4, 3, 1, 2]}} + \overline{\mathcal{Y}_{[3,
				4, 2, 1]}} + \overline{\mathcal{Y}_{[4, 3, 2, 1]}}) .
	\end{equation*}
\end{lemma}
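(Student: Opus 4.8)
The plan is to verify two things: first, that $P_{cps}(\mathcal{Y})$ as defined actually lies in $\mathbb{C}_{ps}^{n^4}$, and second, that it is the orthogonal projection of $\mathcal{Y}$ onto this (closed, linear) subspace, i.e.\ that $\mathcal{Y} - P_{cps}(\mathcal{Y})$ is orthogonal to every CPS tensor. Since $\mathbb{C}_{ps}^{n^4}$ is a linear subspace of the (real) inner-product space $\mathbb{C}^{n^4}$ — it is closed under real linear combinations, which is what matters here given $\lambda_i\in\mathbb{R}$ in the decomposition — the orthogonal projection onto it exists and is unique, so it suffices to check these two properties for the given formula.

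First I would observe that the eight terms in the formula are exactly the images of $\mathcal{Y}$ under the eight operations generated by: (a) swapping indices $1\leftrightarrow 2$, (b) swapping indices $3\leftrightarrow 4$, and (c) swapping the pair $(1,2)\leftrightarrow(3,4)$ together with complex conjugation. Operations (a) and (b) each have order $2$, commute with each other, and the pair-swap-with-conjugation (c) also has order $2$ (conjugating twice is the identity); together they generate a group $G$ of order $8$ acting on $\mathbb{C}^{n^4}$ by real-linear isometries. The defining conditions of a CPS tensor, $\mathcal{A}_{ijkl}=\mathcal{A}_{\pi(ij)\pi(kl)}$ and $\mathcal{A}_{ijkl}=\overline{\mathcal{A}}_{klij}$, say precisely that $\mathcal{A}$ is fixed by every element of $G$. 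Hence $P_{cps} = \frac{1}{|G|}\sum_{g\in G} g$ is the standard group-averaging (Reynolds) operator, and the result follows from the general fact that such an average is the orthogonal projection onto the fixed-point subspace.

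Concretely, the key steps are: (1) List the eight group elements and check that applying any generator permutes them among themselves — this shows $g\circ P_{cps} = P_{cps}$ for every $g\in G$, so $P_{cps}(\mathcal{Y})$ satisfies both CPS conditions and lies in $\mathbb{C}_{ps}^{n^4}$; here one must be careful that the generator involving conjugation, when composed with a term that already carries a bar, produces the correctly-conjugated term in the list (e.g.\ $g_c$ applied to $\overline{\mathcal{Y}_{[3,4,1,2]}}$ gives $\mathcal{Y}_{[1,2,3,4]}=\mathcal{Y}$, consistent with the list). (2) If $\mathcal{A}\in\mathbb{C}_{ps}^{n^4}$, then $g\mathcal{A}=\mathcal{A}$ for all $g$, and since each $g$ preserves the real inner product $\langle\cdot,\cdot\rangle$ (note $\langle g\mathcal{Y},g\mathcal{B}\rangle = \langle\mathcal{Y},\mathcal{B}\rangle$, using that conjugation of both arguments preserves $\mathrm{Re}\langle\cdot,\cdot\rangle$), we get $\langle \mathcal{Y}-P_{cps}(\mathcal{Y}),\mathcal{A}\rangle = \langle\mathcal{Y},\mathcal{A}\rangle - \frac{1}{8}\sum_g\langle g\mathcal{Y},\mathcal{A}\rangle = \langle\mathcal{Y},\mathcal{A}\rangle - \frac{1}{8}\sum_g\langle\mathcal{Y},g^{-1}\mathcal{A}\rangle = 0$. (3) Conclude that $P_{cps}(\mathcal{Y})$ is the orthogonal projection, hence the unique minimizer of $\|\mathcal{Y}-\mathcal{X}\|_F$ over $\mathcal{X}\in\mathbb{C}_{ps}^{n^4}$.

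The main obstacle is purely bookkeeping: one must treat the inner product as a \emph{real} bilinear form (because the CPS constraint mixes a tensor with its conjugate, the subspace is only real-linear, not complex-linear) and make sure the group action by "conjugate-and-permute" genuinely consists of real-linear isometries, and that conjugation composes correctly with the permutation part. Once the group-averaging viewpoint is set up cleanly, the verification that the eight listed terms are closed under the three generators is a short finite check, and the orthogonality computation is a one-line consequence of $G$-invariance of the inner product.
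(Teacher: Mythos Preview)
Your proposal is correct and, at its core, follows the same two-step skeleton as the paper: establish that $\mathcal{Y}-P_{cps}(\mathcal{Y})$ is orthogonal to every $\mathcal{Z}\in\mathbb{C}_{ps}^{n^4}$, then invoke the Pythagorean identity $\|\mathcal{Y}-\mathcal{Z}\|_F^2=\|\mathcal{Y}-P_{cps}(\mathcal{Y})\|_F^2+\|P_{cps}(\mathcal{Y})-\mathcal{Z}\|_F^2$ to conclude that $P_{cps}(\mathcal{Y})$ is the nearest CPS tensor. The paper does this in three lines, simply asserting the identity $\langle\mathcal{Y},\mathcal{Z}\rangle=\langle P_{cps}(\mathcal{Y}),\mathcal{Z}\rangle$ as ``easy to verify'' and then expanding the norm.

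Where you differ is in packaging: you recognize the eight terms as the orbit of $\mathcal{Y}$ under the order-$8$ group $G$ generated by the two index swaps and the conjugate-pair-swap, and identify $P_{cps}$ as the Reynolds operator $\frac{1}{|G|}\sum_{g\in G}g$. This framing buys you two things the paper leaves implicit: a clean reason why $P_{cps}(\mathcal{Y})$ actually lies in $\mathbb{C}_{ps}^{n^4}$ (it is $G$-fixed because $g\circ P_{cps}=P_{cps}$), and a principled explanation of the orthogonality (each $g$ is a real-linear isometry, so $\langle g\mathcal{Y},\mathcal{Z}\rangle=\langle\mathcal{Y},g^{-1}\mathcal{Z}\rangle=\langle\mathcal{Y},\mathcal{Z}\rangle$ when $\mathcal{Z}$ is $G$-fixed). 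Your care about treating the inner product as real-bilinear is also well placed, since the conjugation generator makes $\mathbb{C}_{ps}^{n^4}$ only a real subspace. The paper's proof is shorter but your argument is more self-contained and generalizes immediately to any symmetry class defined by a finite group of isometries.
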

\begin{proof}
	It is easy to verify that for any $\mathcal{Z} \in \mathbb{C}^{n^4}_{\tmop{ps}}$,
	$\langle \mathcal{Y}, \mathcal{Z} \rangle = \langle
	\mathcal{Z}, \mathcal{Y} \rangle = \langle P_{\tmop{cps}} (\mathcal{Y}),
	\mathcal{Z} \rangle$. Thus for any $\mathcal{Z} \in
	\mathbb{C}^{n^4}_{\tmop{ps}}$, we have
	\begin{eqnarray*}
		\|\mathcal{Y}-\mathcal{Z}\|_F^2 & = & \|\mathcal{Y}- P_{\tmop{cps}}
		(\mathcal{Y}) + P_{\tmop{cps}} (\mathcal{Y}) -\mathcal{Z}\|^2_F\\
		& = & \|\mathcal{Y}- P_{\tmop{cps}} (\mathcal{Y})\|_F^2 +
		\|P_{\tmop{cps}} (\mathcal{Y}) -\mathcal{Z}\|_F^2\\
		& \geq & \|\mathcal{Y}- P_{\tmop{cps}} (\mathcal{Y})\|_F^2 .
	\end{eqnarray*}
	That is, $P_{cps} (\mathcal{Y}) = \arg \min_{\mathcal{Z} \in
		\mathbb{C}^{n^4}_{ps}} \|\mathcal{Z}-\mathcal{Y}\|_F^2$.
\end{proof}
\subsubsection{A nonconvex relaxation of (\ref{model:rank1AppEqv})}

In this section, we consider a nonconvex relaxation of (\ref{model:rank1AppEqv}). By introducing an auxiliary variable $\mathcal{y}$, (\ref{model:matRankCons}) can be reformulated as
\begin{equation}
	\label{model:NonCovOri} \min \frac{1}{2}  \|\mathcal{T}-\mathcal{X}\|_F^2
	\quad s.t.\mathcal{X}=\mathcal{Y}, \mathcal{X} \in \mathbb{C}^{n^4}_{ps},\tmop{rank} (\mathcal{X}_{[1, 2 ; 3, 4]}) = 1,
	\tmop{rank} (\mathcal{Y}_{[3, 2 ; 1, 4]}) = 1.
\end{equation}

By relaxing the constraint $\mathcal{X}=\mathcal{Y}$ and imposing it on the objective function, we obtain the following nonconvex relaxation of (\ref{model:NonCovOri})
\begin{equation}
	\label{model:NonCovRela} \min F (\mathcal{X}, \mathcal{Y}) \assign
	\frac{1}{2}  \|\mathcal{T}-\mathcal{Y}\|_F^2 + \frac{\rho}{2}
	\|\mathcal{X}-\mathcal{Y}\|_F^2  \quad s.t.\mathcal{X} \in
	\mathbb{C}^{n^4}_{ps}, \tmop{rank} (\mathcal{X}_{[1, 2 ; 3, 4]}) = 1, \tmop{rank} (\mathcal{Y}_{[3, 2 ; 1, 4]}) = 1,
\end{equation}
where $\rho > 0$ is a regularization parameter. It is possible to employ the alternating minimization scheme to solve (\ref{model:NonCovRela}), namely, optimize
$\mathcal{X}$ and $\mathcal{Y}$ alternatively as

\begin{align}
	\mathcal{Y}^{k + 1} & =\underset{{rank(\mathcal{Y}_{[3,2;1,4]})=1}}{\arg \min} F
	(\mathcal{X}^{k}, \mathcal{Y})\\
	\mathcal{X}^{k + 1} & = \underset{\mathcal{X}\in\mathbb{C}_{ps}^{n^4},\tmop{rank} (\mathcal{X}_{[1,2;3,4]}) = 1}{\arg \min} F
	(\mathcal{X}, \mathcal{Y}^{k+1}).
\end{align}

In the above, the subproblem about $\mathcal{Y}$ is a simple matrix rank-one approximation problem, which can be solved by SVD efficiently. It can be simplified as
\begin{equation}
	\mathcal{Y}^{k + 1} = \underset{{rank(\mathcal{Y}_{[3,2;1,4]})=1}}{\arg \min}
 \|\frac{2}{1 + \rho}  \left( \frac{1}{2} \mathcal{T}+ \frac{\rho}{2}
	\mathcal{X}^{k} \right) -\mathcal{Y}\|_F^2,
\end{equation}
Assume that $\mathcal{X}^k$ is a CPS tensor, then $\mathcal{Y}^{k+1}_{[3,2,1,4]}$ can be expressed as $\mathcal{Y}^{k+1}_{[3,2,1,4]}=\lambda E\circ \bar{E}$, where $E$ is a Hermitian matrix according to \cite[Theorem 1]{de2007fourth}. We further denote $E=\sum_{i=1}\sigma_ia_i\circ \bar{a}_i$ as the SVD of $E$. Then $$\mathcal{Y}=\sum_{i=1}\sum_{j=1}\lambda\sigma_i\sigma_j\bar{a}_i\circ\bar{a}_j\circ a_i\circ a_j.$$
Thus $\mathcal{X}^{k+1}=\lambda\sigma_1\sigma_1\bar{a}_1\circ\bar{a}_1\circ a\circ a$ is always a rank-one CPS tensor.

\section{Numerical Results}\label{sec:numerical}
In this section, we present some numerical experiments to corroborate the results we obtain in previous sections. We conduct all the following algorithms in MATLAB (Realease 2016b) and performed them on a Lenovo laptop with an Intel(R) Core(TM) Processor with access to 8GB of RAM.
\subsection{Recover CPS Tensor with SMROA Algorithm}
For the real partial-symmetric tensor, we first construct it as $\mathcal{A}=\sum\limits_{i=1}^3\lambda_i E_i\circ E_i$, where $E_i$ are $n\times n~(n=3,4,\cdots,7)$ mutually orthogonal and $\lambda_i$ are generated randomly and are different from each other. We find that the output satisfies that $\mid\hat{\lambda}_1\mid\ge\mid\hat{\lambda}_2\mid\ge\mid\hat{\lambda}_3\mid$ and $X_i=\pm E_{\pi(i)}$. An example is given as follows.
\begin{example}
For $\mathcal{A}=0.0547E_1\circ E_1-0.1048E_2\circ E_2-0.2209E_3\circ E_3$, $\hat{\lambda}_1=-0.2209$, $\hat{X}_1=-E_3$, $\hat{\lambda}_2=-0.1048$, $\hat{X}_1=-E_2$, $\hat{\lambda}_3=0.0547$, $\hat{X}_1=-E_3$.
\end{example}
Then we randomly generated 100 partial-symmetric tensors, the numerical results show that SMROA algorithm recover the matrix outer product decomposition successfully. The output $\{\hat{\lambda}_i,\hat{X}_i\}_{i=1}^r$ satisfy that $\mid\hat{\lambda}_1\mid\ge\mid\hat{\lambda}_2\mid\ge\cdots\ge\\\mid\hat{\lambda}_r\mid$. $\hat{X}_i$ are mutually orthogonal.

For the conjugate partial-symmetric tensor, we also design similar experiments as the real case. And the results are still consistent with our theories. The following is an example.
\begin{example}
$\mathcal{A}=20.6777E_1\circ \bar{E}_1+16.1910E_2\circ \bar{E}_2+7.6104E_3\circ \bar{E}_3-6.7274E_4\circ \bar{E}_4-4.7920E_5\circ \bar{E}_5+2.7811E_6\circ \bar{E}_6$. $E_i$ are complex symmetric matrices and mutually orthogonal. Then the SMROA algorithm generates $\hat{\lambda}_i$ exactly the same as $\lambda_i$ in order.
\end{example}
\subsection{Low Rank Tensor Completion with Nuclear Norm}
Since we are not quite sure what kinds of real data will be partial-symmetric tensor, we use the synthetic data as examples for low-CP-rank partial-symmetric tensor completion problem.

We generate randomly the PS tensor as
\begin{equation}\label{equ:generate}
\mathcal{A}=\sum\limits_{i=1}^r\lambda_i (x_i\circ x_i\circ y_i\circ y_i+y_i\circ y_i\circ x_i\circ x_i),
\end{equation}
then $rank_{CP}(\mathcal{A})\le 2r$. We then recover it by \eqref{equ:completion} and report the average of the relative errors, which is defined as
\[Err:=\frac{\|\mathcal{X}^*-\mathcal{A}\|_F}{\|\mathcal{A}\|_F},\]
where $\mathcal{X}^*$ is the optimizer of \eqref{equ:completion}. The stop criteria for the inner iteration is $\frac{\|X_{k+1}-X_k\|_F}{\|X_k\|_F}< 10^{-10}$. Part of our numerical results are presented in Table \ref{tab:tab1}. The err denotes the average relative errors of 20 instances for each $(n,r,p)$ pair, where $n$ is the dimension of $\mathcal{A}$, $r$ is the same as \eqref{equ:generate} and $p$ is the sample ratio. The number in the $rank_M$ column with star superscript is the maximum $rank_M(\mathcal{\mathcal{X}_*})$ of the solution in the numerical examples.
\begin{table}[!htbp]
\caption{Results of low rank tensor completion}\label{tab:tab1}
\centering
\begin{tabular}{|l|cc|cc|cc|}
\hline
p   & err          & $rank_M$  & err          & $rank_M$  & err          & $rank_M$  \\ \hline
\multicolumn{7}{|c|}{n=10}                                                           \\ \hline
    & \multicolumn{2}{c|}{r=1} & \multicolumn{2}{c|}{r=2} & \multicolumn{2}{c|}{r=3} \\ \hline
0.8 & 1.1305e-8    & 2         & 7.4016e-9    & 4         & 2.8517e-9    & 6         \\ \hline
0.5 & 3.4894e-8    & 2         & 1.0411e-08   & $6^*$     & 1.0883e-08   & 6         \\ \hline
\multicolumn{7}{|c|}{n=15}                                                           \\ \hline
    & \multicolumn{2}{c|}{r=1} & \multicolumn{2}{c|}{r=2} & \multicolumn{2}{c|}{r=3} \\ \hline
0.8 & 1.0548e-08   & 2         & 3.0970e-09   & 4         & 1.6526e-09   & 6         \\ \hline
0.5 & 5.5183e-09   & 2         & 3.2419e-09   & 4         & 2.9300e-09   & 6         \\ \hline
\end{tabular}
\end{table}
From Table \ref{tab:tab1}, we find that most of our examples satisfy that $rank_{CP}(\mathcal{A})=rank_M(\mathcal{A})=2r$ except the cases for $(n=10, r=2, p=0.5)$.

\subsection{Rank-one approximation for CPS tensor}
The CPS tensor $\mathcal{T}$ in this experiment are generated as $\mathcal{T}=\sum_{i=1}^5a_i\circ a_i\circ \bar{a}_i\circ\bar{a}_i$, where every $a_i$ is a randomly generated complex vector. The dimensions are 5. We generated 50 instances. We implement the ADMM for the convex relaxation \eqref{model:conv1} and the ALM for the nonconvex relaxation \eqref{model:NonCovRela}. We first apply ALM for \eqref{model:NonCovRela} within 5 iterations, denote the output as $\mathcal{X}$, and obtain $\bar{\mathcal{X}}=\mathcal{X}/\|\mathcal{X}\|_F$. Let $\rho=\left<\mathcal{T},\bar{\mathcal{X}}\right>$ in \eqref{model:conv1}. Then, we apply ADMM for \eqref{model:conv1}. We observe that it finds a rank-one CPS tensor successfully, that is, the global optimum of \eqref{model:rank1AppEqv}, at average 21.7400 steps.

\section{Conclusions}
In this paper, the matrix outer product decomposition for fourth-order CPS tensors was studied. It was shown that the SROA algorithm can be used to recovery the matrix outer product decomposition exactly. Then, we applied this decomposition for the low-CP-rank tensor completion. We also studied the rank-one equivalence property of fourth-order CPS tensors, which build the relationship between rank-one CPS tensor and a certain unfolded matrix of the CPS tensor. Based on this property, different relaxation approaches were proposed to solving the best rank-one approximation problem for the CPS tensor. Numerical experiments demonstrated the usefulness of the matrix outer product decomposition model.
\backmatter

%
%

\section*{Declarations}
\begin{itemize}
\item Funding. This work was funded by the National Natural Science Foundation of China (Grant No. 11671217, No. 12071234).
\item Conflict of interest/Competing interests. The authors declare that they have no conflict of interest.
\end{itemize}

\bibliography{ref}


\begin{thebibliography}{22}
\ifx \bisbn   \undefined \def \bisbn  #1{ISBN #1}\fi
\ifx \binits  \undefined \def \binits#1{#1}\fi
\ifx \bauthor  \undefined \def \bauthor#1{#1}\fi
\ifx \batitle  \undefined \def \batitle#1{#1}\fi
\ifx \bjtitle  \undefined \def \bjtitle#1{#1}\fi
\ifx \bvolume  \undefined \def \bvolume#1{\textbf{#1}}\fi
\ifx \byear  \undefined \def \byear#1{#1}\fi
\ifx \bissue  \undefined \def \bissue#1{#1}\fi
\ifx \bfpage  \undefined \def \bfpage#1{#1}\fi
\ifx \blpage  \undefined \def \blpage #1{#1}\fi
\ifx \burl  \undefined \def \burl#1{\textsf{#1}}\fi
\ifx \doiurl  \undefined \def \doiurl#1{\url{https://doi.org/#1}}\fi
\ifx \betal  \undefined \def \betal{\textit{et al.}}\fi
\ifx \binstitute  \undefined \def \binstitute#1{#1}\fi
\ifx \binstitutionaled  \undefined \def \binstitutionaled#1{#1}\fi
\ifx \bctitle  \undefined \def \bctitle#1{#1}\fi
\ifx \beditor  \undefined \def \beditor#1{#1}\fi
\ifx \bpublisher  \undefined \def \bpublisher#1{#1}\fi
\ifx \bbtitle  \undefined \def \bbtitle#1{#1}\fi
\ifx \bedition  \undefined \def \bedition#1{#1}\fi
\ifx \bseriesno  \undefined \def \bseriesno#1{#1}\fi
\ifx \blocation  \undefined \def \blocation#1{#1}\fi
\ifx \bsertitle  \undefined \def \bsertitle#1{#1}\fi
\ifx \bsnm \undefined \def \bsnm#1{#1}\fi
\ifx \bsuffix \undefined \def \bsuffix#1{#1}\fi
\ifx \bparticle \undefined \def \bparticle#1{#1}\fi
\ifx \barticle \undefined \def \barticle#1{#1}\fi
\bibcommenthead
\ifx \bconfdate \undefined \def \bconfdate #1{#1}\fi
\ifx \botherref \undefined \def \botherref #1{#1}\fi
\ifx \url \undefined \def \url#1{\textsf{#1}}\fi
\ifx \bchapter \undefined \def \bchapter#1{#1}\fi
\ifx \bbook \undefined \def \bbook#1{#1}\fi
\ifx \bcomment \undefined \def \bcomment#1{#1}\fi
\ifx \oauthor \undefined \def \oauthor#1{#1}\fi
\ifx \citeauthoryear \undefined \def \citeauthoryear#1{#1}\fi
\ifx \endbibitem  \undefined \def \endbibitem {}\fi
\ifx \bconflocation  \undefined \def \bconflocation#1{#1}\fi
\ifx \arxivurl  \undefined \def \arxivurl#1{\textsf{#1}}\fi
\csname PreBibitemsHook\endcsname

\bibitem{kolda2009tensor}
\begin{barticle}
\bauthor{\bsnm{Kolda}, \binits{T.G.}},
\bauthor{\bsnm{Bader}, \binits{B.W.}}:
\batitle{Tensor decompositions and applications}.
\bjtitle{SIAM review}
\bvolume{51}(\bissue{3}),
\bfpage{455}--\blpage{500}
(\byear{2009})
\end{barticle}
\endbibitem

\bibitem{comon2008symmetric}
\begin{barticle}
\bauthor{\bsnm{Comon}, \binits{P.}},
\bauthor{\bsnm{Golub}, \binits{G.}},
\bauthor{\bsnm{Lim}, \binits{L.-H.}},
\bauthor{\bsnm{Mourrain}, \binits{B.}}:
\batitle{Symmetric tensors and symmetric tensor rank}.
\bjtitle{SIAM Journal on Matrix Analysis and Applications}
\bvolume{30}(\bissue{3}),
\bfpage{1254}--\blpage{1279}
(\byear{2008})
\end{barticle}
\endbibitem

\bibitem{jiang2016characterizing}
\begin{barticle}
\bauthor{\bsnm{Jiang}, \binits{B.}},
\bauthor{\bsnm{Li}, \binits{Z.}},
\bauthor{\bsnm{Zhang}, \binits{S.}}:
\batitle{Characterizing real-valued multivariate complex polynomials and their
  symmetric tensor representations}.
\bjtitle{SIAM Journal on Matrix Analysis and Applications}
\bvolume{37}(\bissue{1}),
\bfpage{381}--\blpage{408}
(\byear{2016})
\end{barticle}
\endbibitem

\bibitem{aubry2013ambiguity}
\begin{barticle}
\bauthor{\bsnm{Aubry}, \binits{A.}},
\bauthor{\bsnm{De~Maio}, \binits{A.}},
\bauthor{\bsnm{Jiang}, \binits{B.}},
\bauthor{\bsnm{Zhang}, \binits{S.}}:
\batitle{Ambiguity function shaping for cognitive radar via complex quartic
  optimization}.
\bjtitle{IEEE Transactions on Signal Processing}
\bvolume{61}(\bissue{22}),
\bfpage{5603}--\blpage{5619}
(\byear{2013})
\end{barticle}
\endbibitem

\bibitem{de2007fourth}
\begin{barticle}
\bauthor{\bsnm{De~Lathauwer}, \binits{L.}},
\bauthor{\bsnm{Castaing}, \binits{J.}},
\bauthor{\bsnm{Cardoso}, \binits{J.-F.}}:
\batitle{Fourth-order cumulant-based blind identification of underdetermined
  mixtures}.
\bjtitle{IEEE Transactions on Signal Processing}
\bvolume{55}(\bissue{6}),
\bfpage{2965}--\blpage{2973}
(\byear{2007})
\end{barticle}
\endbibitem

\bibitem{Ni2019Hermitian}
\begin{botherref}
\oauthor{\bsnm{Ni}, \binits{G.}}:
Hermitian tensors
(2019)
\end{botherref}
\endbibitem

\bibitem{Nie2019Hermitian}
\begin{barticle}
\bauthor{\bsnm{Nie}, \binits{J.}},
\bauthor{\bsnm{Yang}, \binits{Z.}}:
\batitle{Hermitian tensor decompositions}.
\bjtitle{SIAM Journal on Matrix Analysis and Applications}
\bvolume{41}(\bissue{3}),
\bfpage{1115}--\blpage{1144}
(\byear{2020})
\end{barticle}
\endbibitem

\bibitem{de2008tensor}
\begin{barticle}
\bauthor{\bsnm{De~Silva}, \binits{V.}},
\bauthor{\bsnm{Lim}, \binits{L.-H.}}:
\batitle{Tensor rank and the ill-posedness of the best low-rank approximation
  problem}.
\bjtitle{SIAM Journal on Matrix Analysis and Applications}
\bvolume{30}(\bissue{3}),
\bfpage{1084}--\blpage{1127}
(\byear{2008})
\end{barticle}
\endbibitem

\bibitem{zhang2001rank}
\begin{barticle}
\bauthor{\bsnm{Zhang}, \binits{T.}},
\bauthor{\bsnm{Golub}, \binits{G.H.}}:
\batitle{Rank-one approximation to high order tensors}.
\bjtitle{SIAM Journal on Matrix Analysis and Applications}
\bvolume{23}(\bissue{2}),
\bfpage{534}--\blpage{550}
(\byear{2001})
\end{barticle}
\endbibitem

\bibitem{2018Successive}
\begin{barticle}
\bauthor{\bsnm{Fu}, \binits{T.R.}},
\bauthor{\bsnm{Fan}, \binits{J.Y.}}:
\batitle{Successive partial-symmetric rank-one algorithms for almost unitarily
  decomposable conjugate partial-symmetric tensors}.
\bjtitle{Journal of the Operations Research Society of China}
\bvolume{7}(\bissue{1}),
\bfpage{147}--\blpage{167}
(\byear{2018})
\end{barticle}
\endbibitem

\bibitem{Jiang2018Low}
\begin{barticle}
\bauthor{\bsnm{Jiang}, \binits{B.}},
\bauthor{\bsnm{Ma}, \binits{S.}},
\bauthor{\bsnm{Zhang}, \binits{S.}}:
\batitle{Low-m-rank tensor completion and robust tensor pca}.
\bjtitle{IEEE Journal of Selected Topics in Signal Processing}
\bvolume{12}(\bissue{6}),
\bfpage{1390}--\blpage{1404}
(\byear{2018})
\end{barticle}
\endbibitem

\bibitem{2015Tensor}
\begin{barticle}
\bauthor{\bsnm{Jiang}, \binits{B.}},
\bauthor{\bsnm{Ma}, \binits{S.}},
\bauthor{\bsnm{Zhang}, \binits{S.}}:
\batitle{Tensor principal component analysis via convex optimization}.
\bjtitle{Mathematical Programming}
\bvolume{150}(\bissue{2}),
\bfpage{423}--\blpage{457}
(\byear{2015})
\end{barticle}
\endbibitem

\bibitem{Yuning2016Rank}
\begin{barticle}
\bauthor{\bsnm{Yuning}},
\bauthor{\bsnm{Yang}},
\bauthor{\bsnm{Yunlong}},
\bauthor{\bsnm{Feng}},
\bauthor{\bsnm{Xiaolin}},
\bauthor{\bsnm{Huang}},
\bauthor{\bsnm{Johan}},
\bauthor{\bsnm{A.}},
\bauthor{\bsnm{K.}},
\bauthor{\bsnm{Suykens}}:
\batitle{Rank-1 tensor properties with applications to a class of tensor
  optimization problems}.
\bjtitle{SIAM Journal on Optimization}
\bvolume{26}(\bissue{1}),
\bfpage{171}--\blpage{196}
(\byear{2016})
\end{barticle}
\endbibitem

\bibitem{fu2018decompositions}
\begin{botherref}
\oauthor{\bsnm{Fu}, \binits{T.}},
\oauthor{\bsnm{Jiang}, \binits{B.}},
\oauthor{\bsnm{Li}, \binits{Z.}}:
On decompositions and approximations of conjugate partial-symmetric complex
  tensors.
arXiv preprint arXiv:1802.09013
(2018)
\end{botherref}
\endbibitem

\bibitem{zhang}
\begin{barticle}
\bauthor{\bsnm{Zhang}, \binits{X.}},
\bauthor{\bsnm{Ling}, \binits{C.}},
\bauthor{\bsnm{Qi}, \binits{L.}}:
\batitle{The best rank-1 approximation of a symmetric tensor and related
  spherical optimization problems}.
\bjtitle{SIAM Journal on Matrix Analysis and Applications}
\bvolume{33}(\bissue{3}),
\bfpage{806}--\blpage{821}
(\byear{2012})
\end{barticle}
\endbibitem

\bibitem{golub2013matrix}
\begin{botherref}
\oauthor{\bsnm{Golub}, \binits{G.H.}},
\oauthor{\bsnm{Van~Loan}, \binits{C.F.}}:
Matrix computations, 4th.
Johns Hopkins
(2013)
\end{botherref}
\endbibitem

\bibitem{Ma2011Fixed}
\begin{barticle}
\bauthor{\bsnm{Ma}, \binits{S.}},
\bauthor{\bsnm{Goldfarb}, \binits{D.}},
\bauthor{\bsnm{Chen}, \binits{L.}}:
\batitle{Fixed point and bregman iterative methods for matrix rank
  minimization}.
\bjtitle{Mathematical Programming}
\bvolume{128}(\bissue{1-2}),
\bfpage{321}--\blpage{353}
(\byear{2011})
\end{barticle}
\endbibitem

\bibitem{Bolte2014Proximal}
\begin{barticle}
\bauthor{\bsnm{Bolte}, \binits{J.}},
\bauthor{\bsnm{Sabach}, \binits{S.}},
\bauthor{\bsnm{Teboulle}, \binits{M.}}:
\batitle{Proximal alternating linearized minimization for nonconvex and
  nonsmooth problems}.
\bjtitle{Mathematical Programming}
\bvolume{146}(\bissue{1-2}),
\bfpage{459}--\blpage{494}
(\byear{2014})
\end{barticle}
\endbibitem

\bibitem{Cai2008A}
\begin{barticle}
\bauthor{\bsnm{Cai}, \binits{J.F.}},
\bauthor{\bsnm{Cand\`es}, \binits{E.J.}},
\bauthor{\bsnm{Shen}, \binits{Z.}}:
\batitle{A singular value thresholding algorithm for matrix completion}.
\bjtitle{SIAM Journal on Optimization}
\bvolume{20}(\bissue{4}),
\bfpage{1956}--\blpage{1982}
(\byear{2008})
\end{barticle}
\endbibitem

\bibitem{0A}
\begin{barticle}
\bauthor{\bsnm{Wang}, \binits{Y.}},
\bauthor{\bsnm{Dong}, \binits{M.}},
\bauthor{\bsnm{Xu}, \binits{Y.}}:
\batitle{A sparse rank-1 approximation algorithm for high-order tensors}.
\bjtitle{Applied Mathematics Letters}
\bvolume{102},
\bfpage{106140}
(\byear{2020})
\end{barticle}
\endbibitem

\bibitem{Wang2017Low}
\begin{barticle}
\bauthor{\bsnm{Wang}, \binits{X.}},
\bauthor{\bsnm{Navasca}, \binits{C.}}:
\batitle{Low rank approximation of tensors via sparse optimization}.
\bjtitle{Numerical Linear Algebra with Applications}
\bvolume{25}(\bissue{2}),
\bfpage{2136}
(\byear{2017})
\end{barticle}
\endbibitem

\bibitem{2010Guaranteed}
\begin{barticle}
\bauthor{\bsnm{Recht}, \binits{B.}},
\bauthor{\bsnm{Fazel}, \binits{M.}},
\bauthor{\bsnm{Parrilo}, \binits{P.A.}}:
\batitle{Guaranteed minimum-rank solutions of linear matrix equations via
  nuclear norm minimization}.
\bjtitle{SIAM Review}
\bvolume{52}(\bissue{3}),
\bfpage{471}--\blpage{501}
(\byear{2010})
\end{barticle}
\endbibitem

\end{thebibliography}


\end{document}